\newtheorem{theorem}{Theorem}[section]
\newtheorem{lemma}[theorem]{Lemma}
\newtheorem{proposition}[theorem]{Proposition}
\newtheorem{corollary}[theorem]{Corollary}
\newtheorem{definition}[theorem]{Definition}
\newtheorem{remark}[theorem]{Remark}
\begin{document}
	
\title[Coexistence and orthogonality]{Coexistence of Hilbert space effects and orthogonality}

\author[A. K. Karn]{Anil Kumar Karn}

\address{School of Mathematical Sciences, National Institute of Science Education and Research Bhubaneswar, An OCC of Homi Bhabha National Institute, P.O. - Jatni, District - Khurda, Odisha - 752050, India.}

\email{\textcolor[rgb]{0.00,0.00,0.84}{anilkarn@niser.ac.in}}

\subjclass[2020]{Primary: 47B02; Secondary: 46L10, 47L30.}
	
\keywords{Hilbert space effect, coexistence, partial ortho-coexistence, absolute compatibility}
	
\begin{abstract}
	In this paper, we show that every pair of absolutely compatible Hilbert space effects are coexistent and exhibit a partial orthogonality property. We introduce the notion of partially ortho-coexistence. We generalize absolute compatibility to obtain more examples of partially ortho-coexistent pairs and introduce the notion of generalized compatibility. In the case of $\mathbb{M}_2$, we discuss a geometric behaviour of the generalized compatibility. 
\end{abstract}

\maketitle 

\section{Introduction} 

The study of measurements is one of the main objectives of quantum mechanics. In the classical formulation, an observable is represented by a projection valued measure defined on the Borel sets of $\mathbb{R}$. However, this formulation assumes that measurements are accurate which is far from reality. 

To overcome this situation, G. Ludwig proposed an alternative formulation. In his mathematical formulation of quantum mechanics, a quantum event is represented by a self-adjoint operator on a complex Hilbert space whose spectrum lies in $[0, 1]$. Such operators are called \emph{effects} or more specifically, \emph{Hilbert space effects} and the set of all effects is called the \emph{effect algebra}. 

Let $H$ be a complex Hilbert space and let $B(H)$ denote the set of all bounded linear operators on $H$. We write 
$$B(H)_{sa} := \lbrace x \in B(H): x = x^* \rbrace$$
where $a^*$ is the adjoint operator of $a \in B(H)$ and 
$$B(H)^+ := \lbrace a \in B(H)_{sa}: \langle a \xi, \xi \rangle \ge 0 ~ \mbox{for all} ~ \xi \in H \rbrace.$$
For $x, y \in B(H)_{sa}$, we define $x \le y$ (or $y \ge x$), if $y - x \in B(H)^+$. 

The effect algebra of the Hilbert space $H$ is denoted by $E(H)$. Thus 
\begin{eqnarray*}
	E(H) &=& \lbrace a \in B(H)_{sa}: 0 \le a \le I \rbrace \\ 
	&=& \lbrace a \in B(H)^+: \Vert a \Vert \le 1 \rbrace
\end{eqnarray*} 
where $I$ is the identity operator on $H$. 

In the Ludwig's formulation of quantum mechanics, \emph{Coexistence} (of effects) is one of the important relations. A set $A$ of effects in $E(H)$ is said to be a set of \emph{coexistent effects}, if there exists a Boolean ring $\Sigma$ with an additive measure $F: \Sigma \to E(H)$ such that $A \subset F(\Sigma)$. (For the detailed discussion please refer to, for example, \cite[D.1.2.2]{GL83}.) 

Ludwig established that a pair of effects $a, b \in E(H)$ is coexistent if (and only if) there exist $x, y, z \in E(H)$ such that $a = x + y$ and $b = x + z$ with $x + y + z \in E(H)$. (See \cite[Theorem 1.2.4]{GL83}.) An equivalent formulation was observed in \cite{BS10}: $a$ and $b$ are coexistent if and only if there exist $c, d \in E(H)$ such that $c \le a \le d$, $c \le b \le d$ and $a + b = c + d$. In fact, one can choose $c = x$ and $d = x + y + z$. Note that if $a, b \in E(H)$ are coexistent then so is (each pair in) the set $\lbrace a, b, I - a, I - b \rbrace$. 

As an example, it is known that if $a, b \in E(H)$ with $a b = b a$, then $a$ and $b$ are coexistent. Conversely, if $a \in E(H)$, $p$ a projection in $B(H)$ and $a$ and $p$ are coexistent, then $a p = p a$. So a more non-trivial example would be a non-commuting pair of coexistent effects. Though such an example can be fabricated, the author could not find any mention of naturally arising non-commuting pairs of coexistent effects in the literature related to quantum mechanics. 

The notion of absolute compatibility between a pair of effects was introduced and studied in \cite{K18,JKP,K20,K22}. We observe that an absolutely compatible pair of effects is an example of a coexistent pair of effects. We know that an absolutely compatible pair of effects need not commute. In fact, an absolutely compatible pair of strict effects do not commute. It was proved in \cite{JKP} that an absolutely compatible pair of strict effects in $E(\mathbb{C}^2)$ are mixed states. 

The richness of this class motivates us to expand the scope. We introduce the notion of \emph{$x_0$-compatibility} in a pair of effects $a$ and $b$ in $E(H)$ with respect to $x_0 \in E(H)$ where $\Vert x_0 \Vert < 1$ and $x_0$ commutes with both $a$ and $b$. We see that absolute compatibility is $0$-compatibility. Thus $x_0$-compatibility is a generalization of absolute compatibility. We further observe that an $x_0$-compatible pair of effects is again coexistent. In this way, we obtain some more classes of examples of coexistent pair of effects. 

We also obtain a decomposition of an $x_0$-compatible pair of effects in terms of suitable projections. This generalizes a similar decomposition known for absolutely compatible pairs. Further, we explore a geometric behaviour of an $x_0$-compatible pair of effects in the case of $\mathbb{M}_2$. This description underscores an intrinsic property of an orthogonal (or equivalently, coexistent) pair of rank one projections in $\mathbb{M}_2$ being carried forward by a $\lambda$-compatible pair of ($\lambda$-strict) effects in $E(\mathbb{C}^2)$. (Absolutely compatible pairs of strict effects fall between the two classes.) 

Let us recall that the rank one projections in $\mathbb{M}_2$ correspond to the pure states. Whereas the $\lambda$-strict effects that engage in $\lambda$-compatibility correspond to mixed states. We know that coexistent (or equivalently, absolutely compatible) pairs of rank one projections in $\mathbb{M}_2$ are precisely orthogonal pairs. Interestingly, an $\lambda$-compatible pair of effects in $\mathbb{M}_2$ also exhibit some orthogonality albeit at a lesser scale. This observation make $\lambda$-compatible pairs of $\lambda$-strict effects in $\mathbb{M}_2$ (which are mixed states) a curious case to study further. So we are hopeful that the absolutely compatible pairs of effects and for that matter $x_0$-compatible pair of effects will find a suitable place in the theory of Experimental (Quantum) Physics.

A commuting pair of effects as well as an absolutely compatible pair of effects possess an additional property besides being coexistent pairs. To showcase this, we introduce the notion of \emph{partial ortho-coexistence}. Let $a, b \in E(H)$.  We say that $a$ and $b$ are \emph{partially ortho-coexistent}, if there exist $x, y, z \in E(H)$ such that $a = x + y$, $b = x + z$, $x + y + z \in E(H)$ and $y z = 0$.  We show that commuting pairs of effects as well $x_0$-compatible pairs of effects are partial ortho-coexistent. In particular, a pair of decision effects (that is projections in $B(H)$) is coexistent if and only if it is partially ortho-coexistent. 

We can summarize these concept as the following.
\begin{enumerate}
	\item Coexistent pair of effects: there exist $x, y, z, w \in E(H)$ such that $x + y + z + w = I$. 
	\item Partially ortho-coexistent pair of effects: there exist $x, y, z, w \in E(H)$ with $y z = 0$ such that $x + y + z + w = I$. 
	\item Absolutely compatible pair of effects: there exist $x, y, z, w \in E(H)$ with $y z = 0$ and $x w = 0$ such that $x + y + z + w = I$. 
	\item Generalized compatible pair of effects: there exist $x_0, x, y, z, w \in E(H)$ with $\Vert x_0 \Vert < 1$, $y z = 0$ and $x w = 0$ such that $x_0 + x + y + z + w = I$.
\end{enumerate}

We have organised the paper in the following way. In Section 2, we introduce the notion of partially ortho-coexistent effects and discuss some of its elementary properties. We show that a commuting pair of effects are partially ortho-coexistent. 

In Section 3, we recall the notion of an absolutely compatible pair of effect and show that it is partially ortho-coexistent. Next, we introduce the notion of an $x_0$-compatible pair of effects and study its properties on the lines of absolutely compatible pairs. We also descibe its geometric behaviour in the case of $\mathbb{M}_2$. 

In Section 4, we summarize the description of an absolutely compatible pair of effects.

\section{Partial ortho-coexistence} 

In this section, we include an additional condition to the coexistence of a pair of effects. We repeat the following notion mentioned in the introduction. 
\begin{definition}
	Let $a, b \in E(H)$. We say that $a$ and $b$ are \emph{partially ortho-coexistent}, if there exist $x, y, z \in E(H)$ such that $a = x + y$, $b = x + z$, $x + y + z \in E(H)$ and $y z = 0$. 
\end{definition} 
We recall that $x y = 0$ (that is, $x$ is \emph{(algebraically) orthogonal} to $y$), if and only if $\vert x - y \vert = x + y$. (Here $\vert w \vert := (w^* w)^{\frac 12}$ for all $w \in B(H)$.)  Thus 
$$\vert a - b \vert = \vert y - z \vert = y + z = a + b - 2 x$$ 
so that $x = \frac 12 \lbrace a + b - \vert a - b \vert \rbrace := a \dot{\wedge} b$. Again 
$$x + y + z = x + \vert a - b \vert = \frac 12 \lbrace a + b + \vert a - b \vert \rbrace := a \dot{\vee} b.$$ 
Hence we conclude that if $a$ and $b$ are partially ortho-coexistent, then $a \dot{\wedge} b, a \dot{\vee} b \in E(H)$. 

Alert readers may recall that the notions $\dot{\wedge}$ and $\dot{\vee}$ were considered in \cite{K18} to introduce the idea of \emph{absolutely ordered vector spaces} which include vector lattices as examples. It was shown in \cite{K21} that in a vector $V$, $a \dot{\wedge} b = a \wedge b$ and $a \dot{\vee} b = a \vee b$ whenever $a, b \in V$. 

Let $a, b \in B(H)_{sa}$. 
%
Then $a \dot{\wedge} b \le a \le a \dot{\vee} b$, $a \dot{\wedge} b \le b \le a \dot{\vee} b$ and $a \dot{\wedge} b + a \dot{\vee} b = a + b$. 
Further 
$$(a - (a \dot{\wedge} b)) + (b - (a \dot{\wedge} b)) = \vert a - b \vert = \vert (a - (a \dot{\wedge} b)) - (b - (a \dot{\wedge} b)) \vert$$ 
Thus $a$ and $b$ are partially ortho-coexistent, if $a \dot{\wedge} b, a \dot{\vee} b \in E(H)$. We record this conclusion in the following result.  
\begin{theorem}\label{poc}
	Let $a, b \in E(H)$. Then $a$ and $b$ are partially ortho-coexistent if and only if $a \dot{\wedge} b, a \dot{\vee} b \in E(H)$. In this case, $x, y, z \in E(H)$ are determined uniquely whenever $a = x + y$, $b = x + z$ and $y z = 0$. In fact, we have $x = a \dot{\wedge} b$, $y = (a - b)^+$, $z = (a - b)^-$ and $x + y + z = a \dot{\vee} b$. (Here $u^+ := \frac 12 (\vert u \vert + u)$ and $u^- := \frac 12 (\vert u \vert - u)$, if $u \in B(H)_{sa}$.)
\end{theorem}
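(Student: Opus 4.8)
The plan is to prove the two implications separately and then read off uniqueness and the explicit formulas from the orthogonality characterisation $yz = 0 \iff |y-z| = y+z$ already recorded above.

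First I would dispose of the forward implication, which is essentially contained in the computation preceding the statement. Assuming $a$ and $b$ partially ortho-coexistent, I take witnessing elements $x, y, z \in E(H)$ and use $yz = 0$ to write $|a - b| = |y - z| = y + z$. Substituting $a = x + y$ and $b = x + z$ then gives $x = \frac{1}{2}\{a + b - |a - b|\} = a \dot{\wedge} b$ and $x + y + z = \frac{1}{2}\{a + b + |a - b|\} = a \dot{\vee} b$; since $x \in E(H)$ and $x + y + z \in E(H)$ by hypothesis, both $a \dot{\wedge} b$ and $a \dot{\vee} b$ lie in $E(H)$.

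For the converse, assuming $a \dot{\wedge} b, a \dot{\vee} b \in E(H)$, I would simply produce the candidate decomposition $x := a \dot{\wedge} b$, $y := (a - b)^+$, $z := (a - b)^-$ and check all the required properties. The identities $x + y = a$, $x + z = b$ and $x + y + z = a \dot{\vee} b$ are routine from $(a-b)^{\pm} = \frac{1}{2}(|a-b| \pm (a-b))$. Membership in $E(H)$ follows because $y = a - x \ge 0$ and $z = b - x \ge 0$ (using $a \dot{\wedge} b \le a$ and $a \dot{\wedge} b \le b$ established above), while $y \le a \le I$ and $z \le b \le I$ supply the upper bounds. The one point deserving care is the orthogonality $yz = (a-b)^+(a-b)^- = 0$; this is the disjointness of the positive and negative parts of the self-adjoint element $a - b$, which I would obtain from $(|a-b| + (a-b))(|a-b| - (a-b)) = |a-b|^2 - (a-b)^2 = 0$ together with the commutation $(a-b)|a-b| = |a-b|(a-b)$.

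Finally, for uniqueness, I would start from an arbitrary decomposition $a = x + y$, $b = x + z$ with $x, y, z \in E(H)$ and $yz = 0$. Then $a - b = y - z$ with $y, z \ge 0$, and the orthogonality $yz = 0$ gives $|y - z| = y + z$. Hence $(a - b)^+ = (y-z)^+ = \frac{1}{2}(|y-z| + (y-z)) = y$ and likewise $(a-b)^- = z$, after which $x = a - y = a \dot{\wedge} b$ is forced. This simultaneously pins down the explicit formulas and the uniqueness. The only genuinely delicate ingredient in the whole argument is the operator-orthogonality step $(a-b)^+(a-b)^- = 0$; everything else is bookkeeping with the relations $a \dot{\wedge} b \le a, b \le a \dot{\vee} b$ and $a \dot{\wedge} b + a \dot{\vee} b = a + b$ already in hand.
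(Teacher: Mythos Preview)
Your proposal is correct and follows essentially the same approach as the paper: the paper's argument is the discussion immediately preceding the theorem statement, which derives $x = a\dot{\wedge}b$ and $x+y+z = a\dot{\vee}b$ from any witnessing decomposition via $yz=0 \iff |y-z|=y+z$, and for the converse verifies orthogonality of $a - a\dot{\wedge}b$ and $b - a\dot{\wedge}b$ through that same characterisation. Your only cosmetic difference is naming the converse witnesses as $(a-b)^{\pm}$ directly and checking $(a-b)^+(a-b)^-=0$ via $|a-b|^2=(a-b)^2$, whereas the paper writes them as $a - a\dot{\wedge}b$, $b - a\dot{\wedge}b$ and invokes $|y-z|=y+z$; these are of course the same elements and the same fact.
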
 
\begin{corollary}\label{pcoc} 
	Let $a, b \in E(H)$ be partially ortho-existent. Then $I - a$ and $I - b$ are also partial ortho-coexistent in $E(H)$. 
\end{corollary}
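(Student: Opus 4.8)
The plan is to push everything through the characterization established in Theorem~\ref{poc}: a pair of effects is partially ortho-coexistent if and only if both its $\dot{\wedge}$ and its $\dot{\vee}$ lie in $E(H)$. So the entire task reduces to computing $(I-a)\dot{\wedge}(I-b)$ and $(I-a)\dot{\vee}(I-b)$ and verifying that each is an effect. Since by hypothesis $a$ and $b$ are already partially ortho-coexistent, Theorem~\ref{poc} hands us $a\dot{\wedge}b, a\dot{\vee}b \in E(H)$ for free, and the goal is to express the two quantities for $I-a, I-b$ in terms of these.

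First I would record the elementary observation that complementation leaves the relevant absolute value unchanged: $\vert (I-a)-(I-b)\vert = \vert b-a\vert = \vert a-b\vert$, because $(I-a)-(I-b) = -(a-b)$. Substituting this into the defining formulas $u\dot{\wedge}v = \frac 12\lbrace u+v-\vert u-v\vert\rbrace$ and $u\dot{\vee}v = \frac 12\lbrace u+v+\vert u-v\vert\rbrace$ yields the De~Morgan-type identities $(I-a)\dot{\wedge}(I-b) = I-(a\dot{\vee}b)$ and $(I-a)\dot{\vee}(I-b) = I-(a\dot{\wedge}b)$, where one sees the roles of $\dot{\wedge}$ and $\dot{\vee}$ being interchanged by the complementation.

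Next I would invoke the fact that $E(H)$ is invariant under the map $w\mapsto I-w$, i.e.\ $0\le w\le I$ holds if and only if $0\le I-w\le I$. Applying this to the effects $a\dot{\wedge}b$ and $a\dot{\vee}b$ gives $I-(a\dot{\vee}b), I-(a\dot{\wedge}b)\in E(H)$, and hence by the identities above both $(I-a)\dot{\wedge}(I-b)$ and $(I-a)\dot{\vee}(I-b)$ lie in $E(H)$. The converse direction of Theorem~\ref{poc} then immediately delivers that $I-a$ and $I-b$ are partially ortho-coexistent, completing the argument.

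I do not expect any genuine obstacle in this proof; it is essentially a duality computation. The only point demanding care is the bookkeeping in the De~Morgan identities, making sure that $\dot{\wedge}$ and $\dot{\vee}$ are correctly swapped under $w\mapsto I-w$ so that the positivity and the upper bound $\le I$ both transfer in the right direction.
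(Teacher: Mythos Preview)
Your proposal is correct and follows essentially the same route as the paper: reduce via Theorem~\ref{poc} to showing $(I-a)\dot{\wedge}(I-b),\,(I-a)\dot{\vee}(I-b)\in E(H)$, establish the De~Morgan-type identities $(I-a)\dot{\wedge}(I-b)=I-(a\dot{\vee}b)$ and $(I-a)\dot{\vee}(I-b)=I-(a\dot{\wedge}b)$, and then use closure of $E(H)$ under $w\mapsto I-w$. The only cosmetic difference is that the paper records the De~Morgan identities in the slightly more general form $u-(v\dot{\wedge}w)=(u-v)\dot{\vee}(u-w)$ and $u-(v\dot{\vee}w)=(u-v)\dot{\wedge}(u-w)$ before specializing to $u=I$.
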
 
\begin{proof}
	By Theorem \ref{poc}, we have $a \dot{\wedge} b, a \dot{\vee} b \in E(H)$ so that $0 \le a \dot{\wedge} b \le a \dot{\vee} b \le I$. Thus $0 \le I - (a \dot{\vee} b) \le I - (a \dot{\wedge} b) \le I$. Since 
	$$u - (v \dot{\wedge} w) = (u - v) \dot{\vee} (u - w)$$ 
	and 
	$$u - (v \dot{\vee} w) = (u - v) \dot{\wedge} (u - w)$$ 
	for all $u, v, w \in B(H)_{sa}$, we have 
	$$0 \le (I - a) \dot{\wedge} (I - b) \le (I - a) \dot{\vee} (I - b) \le I.$$ 
	Thus $(I - a) \dot{\wedge} (I - b), (I - a) \dot{\vee} (I - b) \in E(H)$. That is, $I - a$ and $I - b$ are also partial ortho-coexistent in $E(H)$. 
\end{proof} 
\begin{theorem}\label{poc1} 
	Let $a, b \in E(H)$. Then the following statements are equivalent:
	\begin{enumerate}
		\item $a$ and $b$ are partially ortho-coexistent;
		\item $a = a_1 + a_2$ for some $a_1, a_2 \in E(H)$ such that $a_1 \le a \dot{\wedge} b$ and $a_2 \le I - b$; 
		\item $b = b_1 + b_2$ for some $b_1, b_2 \in E(H)$ such that $b_1 \le a \dot{\wedge} b$ and $b_2 \le I - a$.
	\end{enumerate}
\end{theorem}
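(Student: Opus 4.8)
The plan is to establish the cycle $(1)\Rightarrow(2)\Rightarrow(1)$ and then obtain the equivalence with $(3)$ by symmetry. Throughout I would lean on Theorem \ref{poc}, which reduces partial ortho-coexistence of $a$ and $b$ to the single condition $a \dot{\wedge} b, a \dot{\vee} b \in E(H)$, together with the two purely algebraic identities $(a-b)^+ = a - (a \dot{\wedge} b)$ and $a \dot{\vee} b = a + b - (a \dot{\wedge} b)$, both valid for all $a, b \in B(H)_{sa}$, as well as the order relations $a \dot{\wedge} b \le a \le a \dot{\vee} b$ recorded before the theorem.

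For $(1)\Rightarrow(2)$, I would simply read off the canonical decomposition furnished by Theorem \ref{poc} and set $a_1 := a \dot{\wedge} b$ and $a_2 := (a-b)^+$, so that $a = a_1 + a_2$ with $a_1, a_2 \in E(H)$. The bound $a_1 \le a \dot{\wedge} b$ is then an equality, hence automatic. For $a_2 \le I - b$, I would compute $(a-b)^+ + b = \bigl(a - (a \dot{\wedge} b)\bigr) + b = a \dot{\vee} b$ and invoke $a \dot{\vee} b \in E(H)$, again from Theorem \ref{poc}, to conclude $(a-b)^+ + b \le I$.

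For $(2)\Rightarrow(1)$, by Theorem \ref{poc} it suffices to verify $a \dot{\wedge} b \in E(H)$ and $a \dot{\vee} b \in E(H)$; the upper bound $a \dot{\wedge} b \le a \le I$ and the lower bound $a \dot{\vee} b \ge a \ge 0$ hold for free. The positivity $a \dot{\wedge} b \ge 0$ follows at once from $0 \le a_1 \le a \dot{\wedge} b$. For the remaining bound $a \dot{\vee} b \le I$, I would rewrite $(a-b)^+ = a - (a \dot{\wedge} b) = a_2 - \bigl(a \dot{\wedge} b - a_1\bigr)$ and note that $a \dot{\wedge} b - a_1 \ge 0$ yields $(a-b)^+ \le a_2 \le I - b$, so that $a \dot{\vee} b = (a-b)^+ + b \le I$. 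Finally, $(1)\Leftrightarrow(3)$ follows from $(1)\Leftrightarrow(2)$ upon interchanging the roles of $a$ and $b$, since both partial ortho-coexistence and $a \dot{\wedge} b$ are symmetric in $a$ and $b$.

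I expect the only mildly delicate point to be the $(2)\Rightarrow(1)$ direction, where the identity $(a-b)^+ = a - (a \dot{\wedge} b)$ must be combined with the order relation $a_1 \le a \dot{\wedge} b$ in order to transfer the hypothesis $a_2 \le I - b$ onto $(a-b)^+$ and thereby bound $a \dot{\vee} b$ above by $I$. Once this transfer is set up, the argument collapses into a short chain of inequalities, and no genuine analytic obstacle remains.
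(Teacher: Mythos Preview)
Your proposal is correct and follows essentially the same route as the paper's own proof: both directions hinge on Theorem~\ref{poc}, and in each direction you take $a_1 = a \dot{\wedge} b$, $a_2 = a - (a \dot{\wedge} b)$ (which you write as $(a-b)^+$) and use the identity $a \dot{\vee} b = b + (a - (a \dot{\wedge} b))$ to pass between the bound $a_2 \le I - b$ and the bound $a \dot{\vee} b \le I$. The only cosmetic difference is that the paper phrases the $(2)\Rightarrow(1)$ step directly in terms of $a - (a \dot{\wedge} b) \le a - a_1 = a_2$ rather than via $(a-b)^+$, but the underlying inequality chain is identical.
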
 
\begin{proof}
	First, we assume that $a$ is partially ortho-coexistent with $b$. Then by Theorem \ref{poc}, $a \dot{\wedge} b, a \dot{\vee} b \in E(H)$. Set $a _1 := a \dot{\wedge} b$ and $a_2 := a - a_1$. As $a \dot{\wedge} b \le a$, we get that $a_1, a_2 \in E(H)$. Also 
	$$b + a_2 = a + b - (a \dot{\wedge} b) = a \dot{\vee} b \le I$$ 
	so that $a_1 \le a \dot{\wedge} b$ and $a_2 \le I - b$. 
	
	Conversely, we assume that $a = a_1 + a_2$ for some $a_1, a_2 \in E(H)$ with $a_1 \le a \dot{\wedge} b$ and $a_2 \le I - b$. Then 
	$$a - (a \dot{\wedge} b) \le a - a_1 = a_2.$$ 
	Thus 
	\begin{eqnarray*}
		I - (a \dot{\vee} b) &=& I - [a + b - (a \dot{\wedge} b)] \\ 
		&=& (I - b) - [a - (a \dot{\wedge} b)] \\ 
		&\ge& (I - b) - a_2 \ge 0.
	\end{eqnarray*} 
	Thus $a \dot{\wedge} b, a \dot{\vee} b \in E(H)$. Now, by Theorem \ref{poc}, $a$ is partially ortho-coexistent. As $\dot{\wedge}$ and $\dot{\vee}$ are symmetric, the result follows. 
\end{proof}
Returning back to $\dot{\wedge}$ an $\dot{\vee}$, we have 
\begin{proposition}
	Let $a, b \in E(H)$. Then $a \dot{\wedge} b \ge 0$ if and only if there exist $x, y, z \in E(H)$ with $y z = 0$ such that $a = x + y$ and $b = x + z$. In this case, $x + y + z = a \dot{\vee} b$.
\end{proposition}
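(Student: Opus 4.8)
The plan is to read this proposition as the one-sided analogue of Theorem \ref{poc}. There, partial ortho-coexistence was shown to be equivalent to having \emph{both} $a \dot{\wedge} b \ge 0$ and $a \dot{\vee} b \le I$; here only the lower inequality is demanded, and accordingly the requirement $x + y + z \in E(H)$ is dropped from the conclusion. I would therefore reuse the canonical decomposition already appearing in Theorem \ref{poc} and simply track which inequalities are actually needed.

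For the forward implication, I would assume $a \dot{\wedge} b \ge 0$ and set $x := a \dot{\wedge} b$, $y := (a - b)^+$ and $z := (a - b)^-$. Using $u^{\pm} = \frac 12 (\vert u \vert \pm u)$, a direct computation gives $x + y = \frac 12 (a + b - \vert a - b \vert) + \frac 12 (\vert a - b \vert + (a - b)) = a$ and likewise $x + z = b$, while $y z = (a - b)^+ (a - b)^- = 0$ because the positive and negative parts of a self-adjoint operator are orthogonal. It then remains to check $x, y, z \in E(H)$. Positivity of $y$ and $z$ is automatic; and since $0 \le a, b \le I$ forces $-I \le a - b \le I$, we get $\vert a - b \vert \le I$, hence $y, z \le \vert a - b \vert \le I$. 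For $x$, the hypothesis gives $x \ge 0$, while the ordering $a \dot{\wedge} b \le a \le I$ recorded before Theorem \ref{poc} gives $x \le I$. Finally $x + y + z = a \dot{\wedge} b + \vert a - b \vert = a \dot{\vee} b$.

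For the converse, I would suppose $x, y, z \in E(H)$ satisfy $y z = 0$, $a = x + y$ and $b = x + z$. The key step is the orthogonality characterization recalled in the excerpt: $y z = 0$ if and only if $\vert y - z \vert = y + z$. Since $a - b = y - z$, this yields $\vert a - b \vert = y + z$, and therefore $a \dot{\wedge} b = \frac 12 (a + b - \vert a - b \vert) = \frac 12 \big( (x + y) + (x + z) - (y + z) \big) = x \ge 0$. The same identity gives $x + y + z = x + \vert a - b \vert = a \dot{\vee} b$, which settles both the equivalence and the final claim.

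The computation itself is routine; the one point that deserves care is that $x$ is not a free choice but is \emph{forced} to equal $a \dot{\wedge} b$ by the orthogonality condition, which is exactly what makes the backward direction work and what pins down $x + y + z = a \dot{\vee} b$. The remaining subtlety, namely verifying the membership $x, y, z \in E(H)$ and in particular the norm bound $x \le I$, is handled entirely by order facts already established before Theorem \ref{poc}, so I expect no genuine obstacle beyond bookkeeping.
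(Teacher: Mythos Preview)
Your argument is correct and follows essentially the same route as the paper. The only cosmetic difference is that you write the canonical decomposition as $y = (a-b)^+$, $z = (a-b)^-$, while the paper writes $y = a - a \dot{\wedge} b$, $z = b - a \dot{\wedge} b$; these are the same elements, and the converse and the identification $x + y + z = a \dot{\vee} b$ are verbatim the paper's computation.
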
 
\begin{proof}
	First we assume that $a \dot{\wedge} b \ge 0$. Put $x = a \dot{\wedge} b$, $y = a - a \dot{\wedge} b$ and $z = b - a \dot{\wedge} b$. Then $x, y, z \in E(H)$ with $a = x + y$ and $b = x + z$. Also 
	$$y + z = a + b - 2 (a \dot{\wedge} b) = \vert a - b \vert = \vert y - z \vert$$ 
	so that $y z = 0$. 
	
	Conversely, we assume that $x, y, z \in E(H)$ with $y z = 0$ such that $a = x + y$ and $b = x + z$. Then $\vert a - b \vert = \vert y - z \vert = y + z$ so that 
	$$a \dot{\wedge} b = \frac 12 \lbrace a + b - \vert a - b \vert \rbrace = x \ge 0.$$ 
	In this case, 
	$x + y + z = a + b - (a \dot{\wedge} b) = a \dot{\vee} b$. 
\end{proof} 
\begin{proposition}
	Let $p, q \in E(H)$ be projections such that $p \dot{\wedge} q \in B(H)^+$. Then $p q = q p$. 
\end{proposition}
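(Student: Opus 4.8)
The plan is to reduce the problem to the decomposition supplied by the preceding proposition and then to exploit the elementary fact that a positive operator dominated by a projection commutes with that projection. First I would invoke the Proposition immediately above: since $p \dot{\wedge} q \in B(H)^+$, putting $x := p \dot{\wedge} q$, $y := p - x$ and $z := q - x$ yields $x, y, z \in E(H)$ with $p = x + y$, $q = x + z$ and, crucially, $y z = 0$.

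Next I would record the two pairs of commutation relations $p x = x p = x$ and $q x = x q = x$. These rest on the order inequalities $0 \le x = p \dot{\wedge} q \le p$ and $0 \le x \le q$ noted earlier in the section, combined with the standard observation that whenever $0 \le w \le r$ for a projection $r$, the operator $w$ vanishes on $\ker r$ (if $r\xi = 0$ then $0 \le \langle w\xi, \xi\rangle \le \langle r\xi, \xi\rangle = 0$, so $w\xi = 0$ by positivity), whence $w = w r$ and, taking adjoints, $w = r w$. Applying this with $r = p$ and then $r = q$ gives the stated relations.

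With these in hand I would expand the orthogonality condition. Using $p x = x$ and $x q = x$,
\[
0 = y z = (p - x)(q - x) = p q - p x - x q + x^2 = p q - 2 x + x^2,
\]
so that $p q = 2 x - x^2$. Since $x$ is self-adjoint, the right-hand side $2 x - x^2$ is self-adjoint; hence $p q$ is self-adjoint, and therefore $p q = (p q)^* = q^* p^* = q p$, which is the desired conclusion.

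I do not expect a serious obstacle here once the decomposition is available. The one step calling for a genuine (if routine) argument is the commutation relation $p x = x$ extracted from $0 \le x \le p$; everything after it is a one-line expansion. The conceptual heart of the matter is simply that the algebraic orthogonality $y z = 0$ forces $p q$ to coincide with a self-adjoint polynomial in the single operator $x$, and self-adjointness of the product of two self-adjoint operators is precisely their commutativity.
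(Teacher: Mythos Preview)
Your argument is correct. Both proofs start from the same decomposition---your $x = p \dot{\wedge} q$, $y = p - x$, $z = q - x$ is exactly the paper's $z, x, y$ (the labels are permuted)---and both use the fact that a positive operator dominated by a projection is absorbed by that projection. The divergence is in the endgame: the paper, after recording $xp = px = x$, $zp = pz = z$, $yq = qy = y$, $zq = qz = z$ (its labels), observes $z(p-q) = 0 = (p-q)z$ and from this extracts $zx = xz$ and $zy = yz$, so that $p = x+z$ commutes with $q = y+z$. You instead expand $0 = yz = pq - 2x + x^{2}$ and read off that $pq$ equals the self-adjoint element $2x - x^{2}$, whence $pq = (pq)^{*} = qp$. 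Your route is shorter and conceptually clean (it shows $pq$ is a polynomial in the single self-adjoint operator $p \dot{\wedge} q$); the paper's route makes the mutual commutation of the pieces of the decomposition explicit, which gives slightly more information.
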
 
\begin{proof}
	As $p \dot{\wedge} q \ge 0$, we get $\vert p - q \vert \le p + q$. Consider $p - q = x - y$ where $x, y \in B(H)^+$ and $x y = 0 (= y x)$. Then $\vert p - q \vert = \vert x - y \vert = x + y$ so that $x + y \le p + q = x - y + 2 q$. Thus, we get $0 \le y \le q$. As $p - q = x - y$, we further get $0 \le x \le p$. Let $p = x + z$. Then $z \in B(H)^+$ and $q = y + z$. Since $p$ and $q$ are projections, and since $0 \le x, z \le p$ and $0 \le y, z \le q$, we deduce that $x p = p x = x$; $z p = p z = z$; $y q = q y = y$; and $z q = q z = z$. Thus 
	$$z ( x - y) = z (p - q) = 0 = (p - q) z = (x - y) z.$$ 
	Hence $z x = z y$ and $x z = y z$. Again, as $p z = z p$, we get $x z + z^2 = z x + z^2$, or equivalently, $x z = z x$. Consequently, $y z = z y$. Thus $x + z$ commutes with $y + z$. In other words, $p q = q p$. 
\end{proof} 
\begin{proposition}\label{coco}
	Let $a, b \in E(H)$ with $a b = b a$. Then $a$ and $b$ are partial ortho-coexistent. 
\end{proposition}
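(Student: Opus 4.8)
The plan is to reduce the claim, via Theorem \ref{poc}, to the single statement that $a \dot{\wedge} b, a \dot{\vee} b \in E(H)$; equivalently, that $0 \le a \dot{\wedge} b$ and $a \dot{\vee} b \le I$. The orthogonality condition demanded in the definition is then automatic: taking $y = (a-b)^+$ and $z = (a-b)^-$ as in Theorem \ref{poc}, one always has $(a-b)^+ (a-b)^- = 0$ for the self-adjoint element $a - b$, regardless of commutativity. Thus the entire content of the proposition lies in the two order bounds on $a \dot{\wedge} b$ and $a \dot{\vee} b$, and commutativity is exactly what I will use to secure them.

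To establish these bounds I would pass to a commutative model. Since $a b = b a$ with $a, b$ self-adjoint, the C*-subalgebra $A \subseteq B(H)$ generated by $a$, $b$ and $I$ is commutative, so by the Gelfand--Naimark theorem it is isometrically and unitally $*$-isomorphic to $C(\Omega)$ for some compact Hausdorff space $\Omega$. Let $f, g \in C(\Omega)$ denote the images of $a, b$. Because $0 \le a \le I$ and $0 \le b \le I$ and the isomorphism is positive and unital, $f$ and $g$ take values in $[0,1]$.

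The key step is that the Gelfand transform intertwines the continuous functional calculus. Hence the self-adjoint element $a - b$ maps to $f - g$, and $\vert a - b \vert = ((a-b)^2)^{1/2}$ maps to the pointwise absolute value $\vert f - g \vert$. Consequently
$$a \dot{\wedge} b = \tfrac12 \lbrace a + b - \vert a - b \vert \rbrace \longleftrightarrow \min(f, g), \qquad a \dot{\vee} b = \tfrac12 \lbrace a + b + \vert a - b \vert \rbrace \longleftrightarrow \max(f, g).$$
Since $f$ and $g$ are $[0,1]$-valued, so are $\min(f,g)$ and $\max(f,g)$; pulling back through the isomorphism yields $0 \le a \dot{\wedge} b$ and $a \dot{\vee} b \le I$. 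Applying Theorem \ref{poc} then finishes the proof.

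I expect the only delicate point to be the justification that $\dot{\wedge}$ and $\dot{\vee}$ transport to pointwise $\min$ and $\max$, i.e.\ that $\vert a - b \vert$ corresponds to $\vert f - g \vert$ and not to some other positive operator. This is precisely the assertion that the Gelfand transform commutes with the continuous functional calculus applied to $t \mapsto \vert t \vert$ on the spectrum of $a - b$; once that is granted, the remaining estimates are the trivial numerical inequalities $0 \le \min(f,g)$ and $\max(f,g) \le 1$. An alternative, representation-free route would be to prove the operator inequalities $(a-b)^+ \le a$ and $(a-b)^+ \le I - b$ directly (whence $a \dot{\wedge} b = a - (a-b)^+ \ge 0$ and $a \dot{\vee} b = b + (a-b)^+ \le I$), but verifying these cleanly still appeals to commutativity, so the Gelfand argument seems the most economical.
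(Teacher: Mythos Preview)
Your proof is correct but follows a different path from the paper. The paper argues directly with operator inequalities: from $ab = ba$ one has $ab \ge 0$, whence $|a-b|^2 = (a-b)^2 = a^2 - 2ab + b^2 \le a^2 + 2ab + b^2 = (a+b)^2$; operator monotonicity of the square root then yields $|a-b| \le a+b$, i.e.\ $a \dot{\wedge} b \ge 0$. Replacing $a,b$ by $I-a,\, I-b$ (which still commute and lie in $E(H)$) gives $|a-b| \le (I-a)+(I-b)$, i.e.\ $a \dot{\vee} b \le I$, and Theorem~\ref{poc} finishes. Your Gelfand--Naimark reduction to $C(\Omega)$ is equally valid and has the virtue of making the lattice picture completely transparent---$\dot{\wedge}$ and $\dot{\vee}$ literally become pointwise $\min$ and $\max$---at the cost of invoking representation theory. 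The paper's argument stays within elementary operator manipulations and exploits the clean symmetry $a \mapsto I - a$ to derive the upper bound from the lower one; your route avoids the (admittedly standard) appeal to operator monotonicity of the square root by making everything scalar.
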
 
\begin{proof}
	As $a b = b a$, we have $a b \in B(H)^+$. Thus 
	$$\vert a - b \vert^2 = a^2 - 2 a b + b^2 \le a^2 - 2 a b + b^2 = (a + b)^2$$ 
	so that $\vert a - b \vert \le a + b$. Hence 
	$$a \dot{\wedge} b = \frac 12 \lbrace a + b - \vert a - b \vert \rbrace \in B(H)^+.$$ 
	Next, as $I - a, I - b \in E(H)$, replacing $a$ and $b$ by $I - a$ and $I - b$ respectively, we also have 
	$$\vert a - b \vert = \vert (I - a) - (I - b) \vert \le (I - a) + (I - B).$$ 
	Thus 
	$$a \dot{\vee} b = \frac 12 \lbrace a + b + \vert a - b \vert \rbrace \le I.$$ 
	Since $0 \le a \dot{\wedge} b \le a \dot{\vee} b \le I$, we have $a \dot{\wedge} b, a \dot{\vee} b \in E(H)$. Thus by Theorem \ref{poc}, $a$ and $b$ are partial ortho-coexistent. 
\end{proof}
\begin{remark} 
	\begin{enumerate}
		\item We recall that if $a, b \in E(H)$ are coexistent, then the decomposition $a = x + y$ and $b = x + z$ where $x, y, z \in E(H)$ with $x+ y + z \in E(H)$, in general, need not be unique. In this sense, the uniqueness of decomposition a partial ortho-coexistent pair in $E(H)$ is an additional feature. 
		\item If $a, b \in E(H)$ with $a b = b a$, then replicating the proof of Proposition \ref{coco}, we can show that each effect in $\lbrace a, I - a \rbrace$ is partial ortho-coexistent with every effect in $\lbrace b, I - b \rbrace$. 
	\end{enumerate}
	
\end{remark} 

\section{Absolute compatibility and its generalization} 

In this and the next section we shall discuss a class of partially ortho-coexistent pairs of effects. Let us recall that in \cite{K18}, the author introduced the notion of \emph{absolute compatibility}. Let $a, b \in E(H)$. We say that $a$ is \emph{absolutely compatible} with $b$, if 
$$\vert a - b \vert + \vert I - a - b \vert = I.$$ 
In this case we write $a \triangle b$. 

It follows from the symmetry in the definition that for $a, b \in E(H)$, we have $\lbrace a, I - a \rbrace \triangle \lbrace b, I - b \rbrace$ whenever $a \triangle b$. It was proved in \cite{K18} that $a \triangle (I - a)$ for some $a \in E(H)$ if and only if $a$ is a projection. It was also proved that if $a \in E(H)$ and if $p$ a projection in $B(H)$, then $a \triangle p$ if and only if $a p = p a$. For a general pair in $E(H)$ the following characterization was obtained.
\begin{theorem}\label{ac}\cite{K18}
	Let $a, b \in E(H)$. Then these statements are equivalent: 
	\begin{enumerate} 
		\item $a \triangle b$.
		\item $a \dot{\wedge} b, a \dot{\vee} b \in E(H)$ with $(a \dot{\wedge} b) (I - (a \dot{\vee} b)) = 0$. 
		\item $a \dot{\wedge} b + a \dot{\wedge} (I - b) = a$. 
		\item $a \dot{\wedge} b + (I - a) \dot{\wedge} b = b$. 
		\item $a \dot{\wedge} b + a \dot{\wedge} (I - b) + (I - a) \dot{\wedge} b + (I - a) \dot{\wedge} (I - b) = I$. 
	\end{enumerate}
\end{theorem}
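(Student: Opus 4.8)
The plan is to reduce all five statements to the single operator identity $\vert a - b\vert + \vert I - a - b\vert = I$ by means of the substitution $u := a - b$ and $v := a + b - I$, both self-adjoint. Since $a = \frac12(I + u + v)$, $b = \frac12(I - u + v)$, $I - a = \frac12(I - u - v)$ and $I - b = \frac12(I + u - v)$, while $\vert a - b\vert = \vert u\vert$ and $\vert I - a - b\vert = \vert v\vert$, substituting into the definitions of $\dot\wedge$ and $\dot\vee$ yields
\[
a \dot\wedge b = \frac12(I + v - \vert u\vert), \qquad I - (a \dot\vee b) = \frac12(I - v - \vert u\vert),
\]
\[
a \dot\wedge (I - b) = \frac12(I + u - \vert v\vert), \qquad (I - a)\dot\wedge b = \frac12(I - u - \vert v\vert),
\]
together with $(I - a)\dot\wedge(I - b) = \frac12(I - v - \vert u\vert)$. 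Verifying these six formulas is the only computation, and it is routine once the substitution is made.

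With the formulas in hand, the equivalences $(1)\Leftrightarrow(3)\Leftrightarrow(4)\Leftrightarrow(5)$ are immediate linear algebra. Adding the appropriate expressions gives
\[
a\dot\wedge b + a\dot\wedge(I - b) = a + \frac12\bigl(I - \vert u\vert - \vert v\vert\bigr),
\]
\[
a\dot\wedge b + (I - a)\dot\wedge b = b + \frac12\bigl(I - \vert u\vert - \vert v\vert\bigr),
\]
and the fourfold sum in (5) equals $2I - \vert u\vert - \vert v\vert$; each of (3), (4), (5) therefore collapses to $\vert u\vert + \vert v\vert = I$, which is precisely (1). So four of the statements are literally the same condition.

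For $(1)\Leftrightarrow(2)$ I would put $c := a\dot\wedge b$ and $e := I - (a\dot\vee b)$ and read off from the formulas that $c - e = v$ and $c + e = I - \vert u\vert$. If (2) holds, then $c, e \ge 0$ and $ce = 0$, so $c - e = v$ exhibits $v$ as a difference of orthogonal positive operators; by uniqueness of the Jordan decomposition $c = v^+$ and $e = v^-$, hence $c + e = \vert v\vert$ and thus $I - \vert u\vert = \vert v\vert$, which is (1). Conversely, if $\vert u\vert + \vert v\vert = I$, then $c + e = \vert v\vert$ and $c - e = v$ force $c = \frac12(\vert v\vert + v) = v^+ \ge 0$ and $e = \frac12(\vert v\vert - v) = v^- \ge 0$; since $a\dot\wedge b \le a \le I$ and $a\dot\vee b \ge a \ge 0$, this places $a\dot\wedge b$ and $a\dot\vee b$ in $E(H)$, and $(a\dot\wedge b)(I - (a\dot\vee b)) = v^+ v^- = 0$ gives (2).

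The genuinely non-computational ingredient, and the step I expect to be the crux, is the appeal to uniqueness of the Jordan decomposition in $(2)\Rightarrow(1)$: one must use that a self-adjoint operator has exactly one expression as a difference of two orthogonal positive operators. Everything else dissolves, after the substitution $u = a - b$, $v = a + b - I$, into the single identity $\vert u\vert + \vert v\vert = I$, so the real work is only to set up and check the six displayed expressions.
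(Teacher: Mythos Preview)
Your argument is correct. The substitution $u=a-b$, $v=a+b-I$ is exactly the right bookkeeping device: it makes the six $\dot\wedge/\dot\vee$ expressions affine in $u$, $v$, $|u|$, $|v|$, after which (1), (3), (4), (5) are visibly the same identity $|u|+|v|=I$, and the remaining equivalence $(1)\Leftrightarrow(2)$ is precisely the uniqueness of the Jordan decomposition of $v$. I checked each of your displayed formulas and the two inequalities $a\dot\wedge b\le a$ and $a\dot\vee b\ge a$ you invoke at the end; all are correct.

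As for comparison with the paper: note that Theorem~\ref{ac} is not proved here but is quoted from \cite{K18}, so there is no in-paper argument to compare against. The original proof in \cite{K18} proceeds more incrementally, working directly with the operators $a,b,I-a,I-b$ and the orthogonality relation $|x-y|=x+y\Leftrightarrow xy=0$; your change of variables to $(u,v)$ packages that same content more symmetrically and makes the equivalence of (1), (3), (4), (5) a one-line observation rather than a chain of implications. The only substantive analytic input in either approach is the same: uniqueness of the decomposition of a self-adjoint operator as a difference of orthogonal positives, which you correctly flag as the crux of $(2)\Rightarrow(1)$.
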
 
We present absolute compatibility in the language of (partially ortho-)coexistence. 
\begin{corollary}
	Let $a, b \in E(H)$. Then $a$ is absolutely compatible with $b$ if and only if there exist $x, y, z, w \in E(H)$ such that $a = x + y$, $b = x + z$, $y z = 0 = x w$ and $x + y + z + w = I$. 
\end{corollary}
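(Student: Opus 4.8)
The plan is to read off the required four-term decomposition directly from the two characterizations already at hand, namely Theorem \ref{ac} for absolute compatibility and Theorem \ref{poc} for partial ortho-coexistence. The only feature of the displayed decomposition that goes beyond partial ortho-coexistence is the extra orthogonality $x w = 0$, and I expect it to correspond exactly to the product condition $(a \dot{\wedge} b)(I - (a \dot{\vee} b)) = 0$ appearing in Theorem \ref{ac}(2). Thus the whole proof is essentially a translation between the two vocabularies.

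For the forward direction, I would start from $a \triangle b$ and invoke Theorem \ref{ac}(2) to obtain $a \dot{\wedge} b, a \dot{\vee} b \in E(H)$ together with $(a \dot{\wedge} b)(I - (a \dot{\vee} b)) = 0$. Since $a \dot{\wedge} b, a \dot{\vee} b \in E(H)$, Theorem \ref{poc} applies and furnishes the partial ortho-coexistent decomposition: set $x = a \dot{\wedge} b$, $y = (a - b)^+$ and $z = (a - b)^-$, so that $a = x + y$, $b = x + z$, $y z = 0$ and $x + y + z = a \dot{\vee} b$. I would then define $w := I - (a \dot{\vee} b)$; because $a \dot{\vee} b \in E(H)$ we have $0 \le w \le I$, hence $w \in E(H)$, and by construction $x + y + z + w = I$. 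Finally $x w = (a \dot{\wedge} b)(I - (a \dot{\vee} b)) = 0$ is precisely the condition supplied by Theorem \ref{ac}(2), completing this implication.

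For the converse, I would assume the existence of $x, y, z, w \in E(H)$ with $a = x + y$, $b = x + z$, $y z = 0 = x w$ and $x + y + z + w = I$. Since $w \in E(H)$ gives $x + y + z = I - w \in E(H)$, the data $a = x + y$, $b = x + z$, $y z = 0$ show that $a$ and $b$ are partially ortho-coexistent. By the uniqueness part of Theorem \ref{poc}, I may identify $x = a \dot{\wedge} b$ and $x + y + z = a \dot{\vee} b$; in particular $a \dot{\wedge} b, a \dot{\vee} b \in E(H)$ and $w = I - (a \dot{\vee} b)$. Substituting into $x w = 0$ yields $(a \dot{\wedge} b)(I - (a \dot{\vee} b)) = 0$, and Theorem \ref{ac}(2) then gives $a \triangle b$.

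I do not anticipate a genuine obstacle here, since the argument is a dictionary between the two descriptions. The one point that needs care is verifying that $w$ actually lands in $E(H)$ (which rests on $a \dot{\vee} b \le I$) and that the uniqueness in Theorem \ref{poc} legitimately lets me replace the abstract $x$ and $x + y + z$ by $a \dot{\wedge} b$ and $a \dot{\vee} b$; it is exactly this identification that makes the orthogonality condition $x w = 0$ on one side match the product condition of Theorem \ref{ac}(2) on the other.
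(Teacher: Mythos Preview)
Your proposal is correct and essentially matches the paper's proof. The only cosmetic difference is packaging: in the forward direction the paper names the four pieces as $x=a\dot{\wedge}b$, $y=a\dot{\wedge}(I-b)$, $z=(I-a)\dot{\wedge}b$, $w=(I-a)\dot{\wedge}(I-b)$ and cites Theorem~\ref{ac}(3)--(5), while you use Theorem~\ref{ac}(2) together with Theorem~\ref{poc} to produce $y=(a-b)^+$, $z=(a-b)^-$ (the same elements, since $a-a\dot{\wedge}b=(a-b)^+$); in the converse the paper computes $\vert a-b\vert=y+z$ directly rather than invoking the uniqueness clause of Theorem~\ref{poc}, but the content is identical.
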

\begin{proof}
	First we assume that $a$ is absolutely compatible with $b$. Put $x = a \dot{\wedge} b$, $y = a \dot{\wedge} (I - b)$, $z = (I - a) \dot{\wedge} b$ and $w = (I - a) \dot{\wedge} (I - b)$. Then by Theorem \ref{ac}, $x, y, z, w \in E(H)$ such that $a = x + y$, $b = x + z$, $y z = 0 = x w$ and $x + y + z + w = I$. 
	
	Conversely, let us assume that $x, y, z, w \in E(H)$ such that $a = x + y$, $b = x + z$, $y z = 0 = x w$ and $x + y + z + w = I$. Then 
	$$\vert a - b \vert = \vert y - z \vert = y + z = a + b - 2 x$$ 
	so that 
	$$a \dot{\wedge} b = \frac 12 \left\lbrace a + b - \vert a - b \vert \right\rbrace = x \in E(H).$$ 
	Again 
	$$a \dot{\vee} b = \frac 12 \left\lbrace a + b + \vert a - b \vert \right\rbrace = a + b - x = x + y + z = I - w \in E(H).$$ 
	Also $I - (a \dot{\vee} b) = w$ so that $(a \dot{\wedge} b) (I - (a \dot{\vee} b)) = 0$. Thus by Theorem \ref{ac} again $a \triangle b$. 
\end{proof} 
\begin{remark}
	Existence of an absolutely compatible pair in $E(H)$ leads to the following type of a `partition' of unity: there exist $x_1, x_2, y_1, y_2 \in E(H)$ with $x_1 x_2 = 0 = y_1 y_2$ such that 
	$$I = x_1 + x_2 + y_1 + y_2.$$ 
	We shall call it an \emph{absolutely compatible partition of unity}. 
\end{remark}
In particular, we deduce that an absolutely compatible pair in $E(H)$ is a partially ortho-coexistent pair. In fact, for $a, b \in E(H)$, we have $a \triangle b$ if and only if $a$ and $b$ are partially ortho-coexistent and $(a \dot{\wedge} b) (I - (a \dot{\vee} b)) = 0$. For a detailed study of absolute compatibility, we refer to \cite{JKP,K18,K20,K22}. (We have summarized the main characterization at the end of this paper.)

%
\subsection{Generalizing the absolute compatibility}

We further generalize the notion of absolute compatibility to obtain a larger class of partially ortho-coexistent pairs of effects. 
\begin{definition}
	Let $a, b \in E(H)$. We say that $a$ and $b$ are \emph{internally compatible}, if there exists $x_0 \in E(H)$ lying in the centre of von Neumann algebra $\mathcal{M}_1$ generated by $a$ and $b$ with $\Vert x_0 \Vert < 1$ such that $\vert a - b \vert + \vert I - a - b \vert = I - x_0$. In this case, we specifically say that $a$ is \emph{$x_0$-compatible} with $b$ and write $a \triangle_{x_0} b$. When $x_0 = \lambda I$ for some real number $\lambda$ with $0 \le \lambda < 1$, we say that $a$ is $\lambda$-compatible with $b$ and write $a \triangle_{\lambda} b$. 
\end{definition} 
Note that absolute compatibility coincides with $0$-compatibility. Due to the symmetry in the definition, we note that $\lbrace a, I - a \rbrace \triangle_{x_0} \lbrace b, I - b \rbrace$ whenever $a \triangle_{x_0} b$. We prove some elementary properties. 
\begin{proposition}\label{relcom}
	Let $a, b \in E(H)$ and let $\mathcal{M}_1$ be the von Neumann algebra generated by $a$ and $b$. Consider $x_0 \in B(H)^+$ lying in the centre of $\mathcal{M}_1$ with $\Vert x_0 \Vert < 1$.
	\begin{enumerate}
		\item If $a \triangle_{x_0} b$, then $a, b \in [\frac 12 x_0, I - \frac 12 x_0]$. 
		\item $a \triangle_{x_0} b$ if and only if $(I - x_0)^{-1} (a - \frac 12 x_0) \triangle (I - x_0)^{-1} (b - \frac 12 x_0)$. 
		\item If $a \triangle_{x_0} b$, then $a \dot{\wedge} b, a \dot{\vee} b \in [\frac 12 x_0, I - \frac 12 x_0]$. 
	\end{enumerate} 
\end{proposition}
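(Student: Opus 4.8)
The plan is to handle the three parts in the order (1), (2), (3), because part (2) realizes $x_0$-compatibility as an affine change of variables turning it into ordinary absolute compatibility, after which (1) and (3) become transparent. Throughout I would first record two standing observations: since $x_0$ lies in the centre of $\mathcal{M}_1$, it commutes with $a$, $b$, and hence with $a - b$, $I - a - b$, and their absolute values; and since $\Vert x_0 \Vert < 1$, the element $I - x_0$ is positive and invertible, with inverse $(I - x_0)^{-1}$ again positive and central in $\mathcal{M}_1$. I would also fix the abbreviations $a' := (I - x_0)^{-1}(a - \frac12 x_0)$ and $b' := (I - x_0)^{-1}(b - \frac12 x_0)$.

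For (1), I would use only the elementary inequalities $\pm(a - b) \le |a - b|$ and $\pm(I - a - b) \le |I - a - b|$, valid for any self-adjoint element. Adding the four suitably signed pairs and substituting the defining identity $|a - b| + |I - a - b| = I - x_0$ yields $2a - I \le I - x_0$, $2b - I \le I - x_0$, $I - 2a \le I - x_0$ and $I - 2b \le I - x_0$, which rearrange exactly to $\frac12 x_0 \le a \le I - \frac12 x_0$ and $\frac12 x_0 \le b \le I - \frac12 x_0$.

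For (2), the key computation is $a' - b' = (I - x_0)^{-1}(a - b)$ together with $I - a' - b' = (I - x_0)^{-1}(I - a - b)$, the latter obtained after clearing $(I - x_0)^{-1}$. The crucial step, which I expect to be the main obstacle, is the factorization $|(I - x_0)^{-1} w| = (I - x_0)^{-1} |w|$ for self-adjoint $w$ commuting with the positive central operator $(I - x_0)^{-1}$; this holds because the product is then self-adjoint and its square equals $(I - x_0)^{-2} w^2$, so taking positive square roots separates the factors. Applying this to $w = a - b$ and $w = I - a - b$ gives $|a' - b'| + |I - a' - b'| = (I - x_0)^{-1}(|a - b| + |I - a - b|)$, and multiplying through by the invertible $I - x_0$ shows that this sum equals $I$ precisely when $|a - b| + |I - a - b| = I - x_0$. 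In the forward direction part (1) guarantees $a', b' \in E(H)$, so that $a' \triangle b'$ is meaningful; in the converse the hypothesis $a' \triangle b'$ already forces $a', b' \in E(H)$, while the standing assumptions on $x_0$ supply the centrality and norm conditions demanded by the definition of $\triangle_{x_0}$.

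For (3), I would combine (2) with Theorem \ref{ac}. From $a \triangle_{x_0} b$ we obtain $a' \triangle b'$, whence Theorem \ref{ac} gives $a' \dot{\wedge} b', a' \dot{\vee} b' \in E(H)$. A short computation using the same pull-out identity yields $a' \dot{\wedge} b' = (I - x_0)^{-1}(a \dot{\wedge} b - \frac12 x_0)$ and $a' \dot{\vee} b' = (I - x_0)^{-1}(a \dot{\vee} b - \frac12 x_0)$. Multiplying the inequalities $0 \le a' \dot{\wedge} b'$ and $a' \dot{\vee} b' \le I$ by the positive operator $I - x_0$ then gives $\frac12 x_0 \le a \dot{\wedge} b$ and $a \dot{\vee} b \le I - \frac12 x_0$; together with the always-valid $a \dot{\wedge} b \le a \dot{\vee} b$ this places both $a \dot{\wedge} b$ and $a \dot{\vee} b$ in $[\frac12 x_0, I - \frac12 x_0]$, as required.
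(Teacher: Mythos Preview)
Your proof is correct and follows essentially the same line as the paper: the four sign choices in (1), the affine reduction via $(I-x_0)^{-1}$ in (2) with the factorization $|(I-x_0)^{-1}w|=(I-x_0)^{-1}|w|$, and the pull-out identities for $\dot{\wedge}$ and $\dot{\vee}$ in (3) are exactly what the paper uses. If anything, you are slightly more explicit than the paper in justifying the absolute-value factorization and in checking that $a',b'\in E(H)$ so that $a'\triangle b'$ is meaningful.
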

\begin{proof}
	$(1)$: Let $a \triangle_{x_0} b$. Then $\vert a - b \vert + \vert I - a - b \vert = I - x_0$ so that $\pm (a - b) \pm (I - a - b) \le I - x_0$. Thus 
	\begin{eqnarray*}
		(a - b) + (I - a - b) &\le& I - x_0 \\ 
		(a - b) - (I - a - b) &\le& I - x_0 \\ 
		- (a - b) + (I - a - b) &\le& I - x_0 \\ 
		- (a - b) - (I - a - b) &\le& I - x_0.
	\end{eqnarray*} 
	Now simplifying these inequalities, we get $\frac 12 x_0 \le a \le I - \frac 12 x_0$ and $\frac 12 x_0 \le b \le I - \frac 12 x_0$. 
	
	$(2)$: Since $\Vert x _0 \Vert < 1$, we get that $I - x_0$ is invertible in $B(H)$. Also, then $(I - x_0)^{-1}$ commutes with $a$, $b$ and $x_0$. Thus $a_1 := (I - x_0)^{-1} (a - \frac 12 x_0)$ and $b_1 := (I - x_0)^{-1} (b - \frac 12 x_0)$ make sense. Now $a_1 - b_1 = (I - x_0)^{-1} (a - b)$ and $I - a_1 - b_1 = (I - x_0)^{-1} (I - a - b)$. Thus 
	$$\vert a_1 - b_1 \vert + \vert I - a_1 - b_1 \vert = (I - x_0)^{-1}(\vert a - b \vert + \vert I - a - b \vert).$$ 
	This leads to the proof of $(2)$. 
	
	$(3)$: Let $a \triangle_{x_0} b$. Then by $(2)$, we have $(I - x_0)^{-1} (a - \frac 12 x_0) \triangle (I - x_0)^{-1} (b - \frac 12 x_0)$. Thus 
	$$0 \le (I - x_0)^{-1} (a - \frac 12 x_0) \dot{\wedge} (I - x_0)^{-1} (b - \frac 12 x_0) \le I$$ 
	and 
	$$0 \le (I - x_0)^{-1} (a - \frac 12 x_0) \dot{\vee} (I - x_0)^{-1} (b - \frac 12 x_0) \le I.$$ 
	Now 
	$$(I - x_0)^{-1} (a - \frac 12 x_0) \dot{\wedge} (I - x_0)^{-1} (b - \frac 12 x_0) = (I - x_0)^{-1} (a \dot{\wedge} b - \frac 12 x_0)$$ 
	and 
	$$(I - x_0)^{-1} (a - \frac 12 x_0) \dot{\vee} (I - x_0)^{-1} (b - \frac 12 x_0) = (I - x_0)^{-1} (a \dot{\vee} b - \frac 12 x_0)$$ 
	so that $a \dot{\wedge} b, a \dot{\vee} b \in [\frac 12 x_0, I - \frac 12 x_0]$. 
\end{proof} 
\begin{remark} 
	It follows from Proposition \ref{relcom}(3) that if $a \triangle_{x_0} b$, then $a$ and $b$ are partially ortho-coexistent. 
\end{remark} 
Next, we obtain a description of internally compatible pairs of effects extending a similar characterization for absolutely compatible pairs (Theorem \ref{acc}(4)). We shall use the following result. 
\begin{lemma}\label{x_0}
	Let $\mathcal{M}$ be a von Neumann algebra and let $v \in \mathcal{M}^+$ with $\Vert v \Vert \le 1$. Then the following facts are equivalent: 
	\begin{enumerate}
		\item $v$ is strict in $\mathcal{M}$;
		\item $(1 - x) v$ and $(1 - x) (1 - v)$ are strict  in $\mathcal{M}$ whenever $x \in (\mathcal{M}^{\prime})^+$ with $\Vert x \Vert < 1$; and 
		\item $(1 - x) v$ and $(1 - x) (1 - v)$ are strict  in $\mathcal{M}$ for some $x \in (\mathcal{M}^{\prime})^+$ with $\Vert x \Vert < 1$. 
	\end{enumerate}
\end{lemma}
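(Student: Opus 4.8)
The plan is to establish the cyclic chain $(1)\Rightarrow(2)\Rightarrow(3)\Rightarrow(1)$, the whole argument resting on one preliminary observation about the multiplier $1-x$. First I would record that observation: since $x\in(\mathcal{M}')^+$ with $\Vert x\Vert<1$, putting $\delta:=1-\Vert x\Vert>0$ we get $\delta\,1\le 1-x\le 1$, so $1-x$ is positive and invertible in $\mathcal{M}'$. Because $x$ lies in the commutant $\mathcal{M}'$ it commutes with $v$ and with $1-v$, so $(1-x)v$ and $(1-x)(1-v)$ are products of commuting positive elements and are therefore again positive elements of $\mathcal{M}$ of norm at most $1$, i.e.\ genuine effects to which the term ``strict'' applies. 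From $xv\ge 0$ and $x(1-v)\ge 0$ I would extract the order sandwich $\delta v\le(1-x)v\le v$ and $\delta(1-v)\le(1-x)(1-v)\le 1-v$; this single inequality drives every implication.

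For $(1)\Rightarrow(2)$ I would use that strictness of $v$ means $\epsilon\,1\le v\le(1-\epsilon)1$ for some $\epsilon>0$. Then for any admissible $x$ the sandwich gives $(1-x)v\ge\delta\epsilon\,1>0$ and $(1-x)v\le v\le(1-\epsilon)1$, so $1-(1-x)v\ge\epsilon\,1>0$ and $(1-x)v$ is strict; the identical computation applied to the (also strict) element $1-v$ yields strictness of $(1-x)(1-v)$. The implication $(2)\Rightarrow(3)$ is immediate, since the admissible set is nonempty (it contains $x=0$), so a statement holding for all admissible $x$ holds for some. For $(3)\Rightarrow(1)$ I would argue in reverse: if $(1-x)v$ and $(1-x)(1-v)$ are strict for some admissible $x$, then in particular $(1-x)v\ge\epsilon\,1$ and $(1-x)(1-v)\ge\epsilon'1$; the upper halves of the sandwich then force $v\ge(1-x)v\ge\epsilon\,1$ and $1-v\ge(1-x)(1-v)\ge\epsilon'1$, so both $v$ and $1-v$ are bounded below away from $0$, which is exactly strictness of $v$.

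I do not expect a genuine obstacle here. The only points needing care are that $(1-x)v$ is a bona fide self-adjoint positive effect and that strictness transfers in both directions; both rest on the commutation $x\in\mathcal{M}'$ (which makes the product positive and self-adjoint) together with the bound $1-x\ge\delta\,1>0$ coming from $\Vert x\Vert<1$ (which makes the two-sided order comparison available). Once the sandwich $\delta v\le(1-x)v\le v$ is in hand, the three implications are routine order manipulations; the subtlest judgement is merely interpreting ``strict'' as ``$v$ and $1-v$ are bounded below'', after which the lower bounds alone suffice for $(3)\Rightarrow(1)$.
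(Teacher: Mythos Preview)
Your argument rests on interpreting ``$v$ is strict in $\mathcal{M}$'' as the existence of $\epsilon>0$ with $\epsilon\,1\le v\le(1-\epsilon)1$. That is \emph{not} the definition used in the paper: an effect $v$ is called strict if the only projections $p,q\in\mathcal{M}$ with $p\le v\le q$ are $p=0$ and $q=1$ (see the parenthetical in Theorem~\ref{acc}(1)). These notions are not equivalent in a general von Neumann algebra; for instance $v(t)=t$ in $L^\infty[0,1]$ is strict in the paper's sense but is bounded neither below away from $0$ nor above away from $1$. Since your $(1)\Rightarrow(2)$ starts from ``$\epsilon\,1\le v$'' and your $(3)\Rightarrow(1)$ starts from ``$(1-x)v\ge\epsilon\,1$'', both implications collapse once the correct definition is used.

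The order sandwich $\delta v\le(1-x)v\le v$ that you isolate is genuinely useful, but by itself it handles only half of each strictness check. From $(1-x)v\le v$ one does get that any projection $p\le(1-x)v$ satisfies $p\le v$, hence $p=0$; but from $(1-x)v\le q$ the sandwich only yields $\delta v\le q$, which does not force $v\le q$. The paper closes this gap multiplicatively: a projection dominating a positive element absorbs it, so $(1-x)v\le q$ gives $(1-x)v\,q=(1-x)v$, and then invertibility of $1-x$ (which you correctly noted) cancels to $vq=v$, whence $v\le q$ and $q=1$. A symmetric multiplicative step is needed in $(3)\Rightarrow(1)$: from $q\le v$ one gets $(1-x-w)q=0$ with $w=(1-x)v$, and strictness of $1-x-w=(1-x)(1-v)$ forces $q=0$. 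So your outline is on the right track (commutation, invertibility of $1-x$, order comparison), but the sandwich must be supplemented by these absorption/cancellation arguments, and the ``bounded below'' reading of strictness should be dropped.
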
 
\begin{proof}
	First, we assume that $v$ is strict  in $\mathcal{M}$ and let $x \in \mathcal{M}^+$ with $\Vert x \Vert < 1$. If $p$ is a projection in $\mathcal{M}$ with $p \le (1 - x) v$, then $p \le v$. As $v$ is strict  in $\mathcal{M}$, we have $p = 0$. Next, if $q$ is a projection in $\mathcal{M}$ with $(1 - x) v \le q$, then $(1 - x) v q = (1 - x) v$. As $\Vert x \vert < 1$, $1 - x$ is invertible in $\mathcal{M}$. Thus $v q = v$. Since $v$ is strict  in $\mathcal{M}$, we get that $q = 1$. Hence $(1 - x) v$ is strict  in $\mathcal{M}$. As $v$ is strict  in $\mathcal{M}$, so is $1 - v$. Thus as above $(1 - x) (1 - v)$ is also strict  in $\mathcal{M}$. Finally, we assume that $(1 - x) v$ and $(1 - x) (1 - v)$ are strict  in $\mathcal{M}$ for some $x \in \mathcal{M}^+$ with $\Vert x \Vert < 1$. Put $(1 - x) v = w$. Then $(1 - x) ( 1 - v) = 1 - x - w$. Thus $w$ and $1 - x - w$ are strict  in $\mathcal{M}$. Let $p$ be a projection in $\mathcal{M}$ with $v \le p$. Then $w \le (1 - x) p \le p$. Since $w$ is strict  in $\mathcal{M}$, we get $p = 1$. Next, let $q$ is a projection in $\mathcal{M}$ such that $q \le v$. Then $ v q = q$ so that $w q = (1 - x) q$. Thus $(1 - x - w) q = 0$. As $1 - x - w$ is strict  in $\mathcal{M}$, we conclude that $q = 0$. Hence $v$ is strict  in $\mathcal{M}$. 
\end{proof}
\begin{theorem}
	Let $a, b \in E(H)$ and let $\mathcal{M}_1$ be the von Neumann algebra generated by $a$ and $b$. Consider $x_0 \in B(H)^+$ lying in the centre of $\mathcal{M}_1$ with $\Vert x_0 \Vert < 1$. Then $a \triangle_{x_0} b$ such that $(I - x_0)^{-1} (a - \frac 12 x_0)$ and $(I - x_0)^{-1} (b - \frac 12 x_0)$ are strict  in $\mathcal{M}_1$, (that is to say that $a$ and $b$ are \emph{$x_0$-strict}  in $\mathcal{M}_1$,)  if and only if there exist $\mathbb{M}_2$-strict projections $p$ and $q$ in $\mathcal{M}_1$ and strict positive elements $w_1$ and $w_2$ in the centre of $\mathcal{M}_1$ with $w_1 + w_2 = I - x_0$ such that $a = x_0 c + w_1 p + w_2 q$ and $b = x_0 c + w_1 p + w_2 q^{\prime}$. Here $c := \frac 12 I_2$. 
\end{theorem}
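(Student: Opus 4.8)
The plan is to reduce the whole statement to the known structure theorem for absolutely compatible \emph{strict} pairs, namely Theorem \ref{acc}(4), by means of the affine reparametrisation already isolated in Proposition \ref{relcom}(2). Since $\Vert x_0 \Vert < 1$ and $x_0$ is central in $\mathcal{M}_1$, the element $I - x_0$ is positive, central and invertible, so $a_1 := (I - x_0)^{-1}(a - \frac 12 x_0)$ and $b_1 := (I - x_0)^{-1}(b - \frac 12 x_0)$ are well-defined self-adjoint elements of $\mathcal{M}_1$. By Proposition \ref{relcom}(2), $a \triangle_{x_0} b$ if and only if $a_1 \triangle b_1$, and by the very definition of $x_0$-strictness the extra hypothesis on $a, b$ is precisely the assertion that $a_1$ and $b_1$ are strict in $\mathcal{M}_1$. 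Thus the theorem becomes a dictionary between the $\mathbb{M}_2$-decomposition of the pair $(a_1, b_1)$ and that of $(a, b)$, with the projections literally unchanged and only the central weights rescaled.

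For the forward direction I would feed the strict absolutely compatible pair $(a_1, b_1)$ into Theorem \ref{acc}(4), obtaining $\mathbb{M}_2$-strict projections $p, q \in \mathcal{M}_1$ and strict positive central elements $w_1^\prime, w_2^\prime$ with $w_1^\prime + w_2^\prime = I$ such that $a_1 = w_1^\prime p + w_2^\prime q$ and $b_1 = w_1^\prime p + w_2^\prime q^\prime$, where $q^\prime = I - q$. Undoing the reparametrisation via $a = \frac 12 x_0 + (I - x_0) a_1$ and noting $x_0 c = \frac 12 x_0$, I set $w_i := (I - x_0) w_i^\prime$; this yields $a = x_0 c + w_1 p + w_2 q$ and $b = x_0 c + w_1 p + w_2 q^\prime$. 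The $w_i$ are central, satisfy $w_1 + w_2 = (I - x_0)(w_1^\prime + w_2^\prime) = I - x_0$, and are strict positive by Lemma \ref{x_0}: applied inside (the centre of) $\mathcal{M}_1$ with $v = w_i^\prime$ and $x = x_0$, the implication $(1)\Rightarrow(2)$ turns strictness of $w_i^\prime$ into strictness of $(I - x_0) w_i^\prime = w_i$.

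For the converse I would run the computation backwards. Given $a = x_0 c + w_1 p + w_2 q$ and $b = x_0 c + w_1 p + w_2 q^\prime$ with $p, q$ being $\mathbb{M}_2$-strict, $w_1, w_2$ strict positive central and $w_1 + w_2 = I - x_0$, I set $w_i^\prime := (I - x_0)^{-1} w_i$, so that $w_1^\prime + w_2^\prime = I$ and $a_1 = w_1^\prime p + w_2^\prime q$, $b_1 = w_1^\prime p + w_2^\prime q^\prime$. Here Lemma \ref{x_0} is used in the opposite direction, $(3)\Rightarrow(1)$: both $(I - x_0) w_i^\prime = w_i$ and $(I - x_0)(I - w_i^\prime) = w_j$ (the other weight) are strict by hypothesis, so each $w_i^\prime$ is strict. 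This is exactly the decomposition required by Theorem \ref{acc}(4), whence $a_1 \triangle b_1$ with $a_1, b_1$ strict, and Proposition \ref{relcom}(2) returns $a \triangle_{x_0} b$ with $a, b$ being $x_0$-strict.

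The genuinely new content here is slight: the depth is entirely carried by Theorem \ref{acc}(4), and the present statement is its relative version obtained by a central shift-and-rescale. Accordingly, the main obstacle I anticipate is not a conceptual one but the strictness bookkeeping across the map $a \mapsto a_1$. One must confirm that the $\mathbb{M}_2$-strictness of $p$ and $q$ is genuinely invariant under the reparametrisation (the projections are untouched, so this reduces to checking that strictness of $a_1, b_1$ is equivalent to $x_0$-strictness of $a, b$, which is the definition), and that strictness of the central weights survives multiplication by $I - x_0$ and by $(I - x_0)^{-1}$ in both directions. This last point is exactly what Lemma \ref{x_0} was designed to supply, so invoking it in the two directions $(1)\Rightarrow(2)$ and $(3)\Rightarrow(1)$ should close the argument without further difficulty.
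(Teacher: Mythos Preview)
Your proposal is correct and follows essentially the same route as the paper: reduce to the absolute-compatibility structure theorem via the central affine map $a \mapsto a_1 = (I-x_0)^{-1}(a-\tfrac12 x_0)$, then transport the decomposition back and use Lemma~\ref{x_0} to control strictness of the central weights in both directions. Two small points are worth noting. First, the paper explicitly records that the von Neumann algebra generated by $a_1,b_1$ coincides with $\mathcal{M}_1$; you should say this too, since it is what guarantees the projections $p,q$ and the weights produced by Theorem~\ref{acc} actually live in $\mathcal{M}_1$ (and that their centre is the centre of $\mathcal{M}_1$). Second, for the converse the paper does not invoke a converse of Theorem~\ref{acc} but instead computes directly that $a-b = w_2(q-q')$ and $I-a-b = w_1(p'-p)$, whence $|a-b|+|I-a-b| = w_2+w_1 = I-x_0$; your route through ``the converse of Theorem~\ref{acc}(4)'' is fine in spirit, but that converse is not stated in the paper, so in practice you would have to perform the same one-line computation (for $a_1,b_1$ rather than $a,b$) anyway.
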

\begin{proof}
	First, we assume that $a \triangle_{x_0} b$ and $(I - x_0)^{-1} (a - \frac 12 x_0)$ and $(I - x_0)^{-1} (b - \frac 12 x_0)$ are strict. As $ \triangle_{x_0} b$, by Proposition \ref{relcom}(2), we have $(I - x_0)^{-1} (a - \frac 12 x_0) \triangle (I - x_0)^{-1} (b - \frac 12 x_0)$. Note that the von Neumann algebra generated by $(I - x_0)^{-1} (a - \frac 12 x_0)$ and $(I - x_0)^{-1} (b - \frac 12 x_0)$ is again $\mathcal{M}_!$, that is, the von Neumann algebra generated by $a$ and $b$. Thus $\mathcal{M}_1$ is unitarily equivalent to $\mathbb{M}_2(\mathcal{M}_0)$ for a suitable abelian von Neumann algebra $\mathcal{M}_0$ by Theorem \ref{acc}(3). As $x_0 \in Z(\mathcal{M}_1)$, there exists $z_0 \in \mathcal{M}_0$ such that $x_0 = z_0 \otimes I_2$. Now by Theorem \ref{acc}(4), there exist $\mathbb{M}_2$-strict projections $p$ and $q$ in $\mathbb{M}_2(\mathcal{M}_0)$ and a strict positive element $y_0$ in $\mathcal{M}_0$ such that 
	$$(I - x_0)^{-1} (a - \frac 12 x_0) = ((I - y_0) \otimes I_2) p + (y_0 \otimes I_2) q$$ 
	and 
	$$(I - x_0)^{-1} (b - \frac 12 x_0) = ((I - y_0) \otimes I_2) p + (y_0 \otimes I_2) q^{\prime}.$$ 
	Put $c := \frac 12 I_2$, $v := y_0 \otimes I_2$, $w_1 := (I_2 - x_0) (I_2 - v)$ and $w_2 := (I_2 - x_0) v$. Then $a = x_0 c + w_1 p + w_2 q$ and $b = x_0 c + w_1 p + w_2 q^{\prime}$ such that $w_1$ and $w_2$ are positive elements in the centre of $\mathcal{M}_1$ with $w_1 + w_2 = I - x_0$. Also, $w_1$ and $w_2$ are strict in $\mathcal{M}_1$ by Lemma \ref{x_0}. 
	
	Conversely, we now assume that $a = x_0 c + w_1 p + w_2 q$ and $b = x_0 c + w_1 p + w_2 q^{\prime}$ for some $\mathbb{M}_2$-strict projections $p$ and $q$ in $\mathcal{M}_1$, $c := \frac 12 I$ and strict positive elements $w_1$ and $w_2$ in the centre of $\mathcal{M}_1$ with $w_1 + w_2 = I - x_0$. Then $a - b = w_2 (q - q^{\prime})$ so that $\vert a - b \vert = w_2$. Next, $I - a - b = w_1 (p^{\prime} - p)$ so that $\vert I - a - b \vert = w_1$. Thus 
	$$\vert a - b \vert + \vert I - a - b \vert = w_2 + w_1 = I - x_0$$ 
	so that $a \triangle_{x_0} b$. Further 
	$$(I - x_0)^{-1} (a - \frac 12 x_0) = (I - x_0)^{-1} w_1 p + (I - x_0)^{-1} w_2 q$$ 
	and 
	$$(I - x_0)^{-1} (b - \frac 12 x_0) = (I - x_0)^{-1} w_1 p + (I - x_0)^{-1} w_2 q^{\prime}.$$ 
	Now by Lemma \ref{x_0}, $(I - x_0)^{-1} w_1$ is strict in $\mathcal{M}_1$ and $(I - x_0)^{-1} w_1 + (I - x_0)^{-1} w_2 = I$ so that $(I - x_0)^{-1} (a - \frac 12 x_0)$ and $(I - x_0)^{-1} (b - \frac 12 x_0)$ are strict in $\mathcal{M}_1$. 
\end{proof}

\subsection{A geometric realization} 
Now, we consider internal compatibility in $E(\mathbb{C}^2)$ and obtain a geometric description with justify the adjective `internal'. Let us recall that the set of rank one projections in $\mathbb{M}_2$ can be identified with the Poincar\'{e} sphere (or Bloch sphere). Among the several identifications, we choose the real affine isomorphism 
$$\begin{bmatrix} a & \alpha \\ \bar{\alpha} & 1 - a \end{bmatrix} \leftrightarrow \left(a, \mathfrak{Re}\alpha, \mathfrak{Im}\alpha \right)$$  
between the set $\mathcal{P}_2$ of rank one projections in $\mathbb{M}_2$ and sphere $$\mathcal{B}_d = \left\lbrace (x, y, z): \left(x - \frac 12\right)^2 + y^2 + z^2 = \left(\frac 12\right)^2 \right\rbrace$$ 
in $\mathbb{R}^3$. It follows from \cite[Remark 3.4]{JKP} that if $A$ and $B$ are strict and absolutely compatible in $\mathbb{M}_2$, then $A$ and $B$ belong to the set 
$$\mathcal{S} = \lbrace X \in \mathbb{M}_2^+: 0 < \det(X) < \frac 14 ~ \textrm{and} ~ trace(X) = 1 \rbrace.$$ 
The above said identification extends to $\mathcal{S}$. Under this map, $\mathcal{S}$ corresponds to the open ball 
$$\mathcal{B}^o = \left\lbrace (x, y, z): \left(x - \frac 12\right)^2 + y^2 + z^2 < \left(\frac 12\right)^2 \right\rbrace$$ 
punctured at the centre $C_0\left( \frac 12, 0, 0 \right)$. ($C_0$ represents $\frac 12 I_2 \in \mathbb{M}_2$ which is strict but is absolutely compatible precisely with projections in $\mathbb{M}_2$ only. That is, it does not find any absolutely compatible couple in $\mathcal{S}$.) We maintain a convention that if a point $A$ lies in $\mathcal{B}_d \bigcup \mathcal{B}^o$, then the corresponding matrix in $\mathcal{P}_2 \bigcup \mathcal{S}$ is also denoted by $A$. 

We propose a definition. Let $\mathcal{P}$ be any sphere in $\mathbb{R}^3$ and assume that $A$ and $B$ are its two distinct internal points. We say that $\lbrace A, B \rbrace$ is an \emph{internal couple} with respect to $\mathcal{P}$, if the sphere described by (the line segment) $AB$ as a diameter lies inside $\mathcal{P}$. 
\begin{theorem}
	Let $\lbrace A, B \rbrace$ is an \emph{internal couple} with respect to the Poincar\'{e} sphere $\mathcal{B}_d$. Then $A \triangle_{\lambda} B$ for some suitable $\lambda$ with $0 \le \lambda < 1$. (Here, by $\lambda$-compatibility, we mean $\lambda I$-compatibility.)
\end{theorem}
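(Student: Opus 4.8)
The plan is to collapse the operator identity defining $\lambda$-compatibility into a single scalar equation, exploiting the fact that in $\mathbb{M}_2$ both relevant differences are traceless. Write $A = \begin{bmatrix} a_1 & \alpha_1 \\ \bar\alpha_1 & 1 - a_1 \end{bmatrix}$ and $B = \begin{bmatrix} a_2 & \alpha_2 \\ \bar\alpha_2 & 1 - a_2 \end{bmatrix}$, so that under the chosen identification $P_A = (a_1, \mathfrak{Re}\alpha_1, \mathfrak{Im}\alpha_1)$ and $P_B = (a_2, \mathfrak{Re}\alpha_2, \mathfrak{Im}\alpha_2)$ are the corresponding interior points of $\mathcal{B}_d$. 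Put $C_0 = (\tfrac12, 0, 0)$, $\vec a = P_A - C_0$ and $\vec b = P_B - C_0$. Since these are interior points, $A$ and $B$ are trace-one positive definite matrices, hence genuine effects in $E(\mathbb{C}^2)$.

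First I would compute the two moduli in the defining relation $A \triangle_\lambda B \iff \vert A - B \vert + \vert I - A - B \vert = (1-\lambda) I$. Both $A - B$ and $I - A - B$ are traceless self-adjoint $2\times 2$ matrices, so each squares to a scalar multiple of $I$; with eigenvalues $\pm t$ a traceless self-adjoint $T$ satisfies $\vert T \vert = t\, I$. Hence $\vert A - B \vert = \Vert P_A - P_B \Vert\, I =: d\,I$ and $\vert I - A - B \vert = e\, I$, where $e = \sqrt{(1 - a_1 - a_2)^2 + \vert \alpha_1 + \alpha_2 \vert^2}$. A short calculation identifies $e$ with the distance from $P_A$ to the antipode $P_{B^\ast}$ of $P_B$ through $C_0$ (the point of $I - B$); equivalently $d = \Vert \vec a - \vec b \Vert$ and $e = \Vert \vec a + \vec b \Vert$. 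Therefore $\vert A - B \vert + \vert I - A - B \vert = (d + e)\, I$, so the choice $\lambda := 1 - (d + e)$ automatically gives $\vert A - B \vert + \vert I - A - B \vert = (1-\lambda) I$. As $\lambda I$ is central in $\mathcal{M}_1$ and commutes with $A$ and $B$, this already yields $A \triangle_\lambda B$ once we verify $0 \le \lambda < 1$.

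The remaining, genuinely geometric step is to extract $d + e \le 1$ from the internal-couple hypothesis. The sphere having $AB$ as a diameter has centre $M = \tfrac12(P_A + P_B)$ and radius $\tfrac12 d$; it lies inside $\mathcal{B}_d$ (a ball of centre $C_0$ and radius $\tfrac12$) precisely when its farthest point from $C_0$ is within radius $\tfrac12$, i.e. $\Vert M - C_0 \Vert + \tfrac12 d \le \tfrac12$. Here I would use $M - C_0 = \tfrac12(\vec a + \vec b)$, so $\Vert M - C_0 \Vert = \tfrac12 e$, whence the containment becomes $\tfrac12(d + e) \le \tfrac12$, that is $d + e \le 1$, which is exactly $\lambda \ge 0$. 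Since $A$ and $B$ are distinct, $d > 0$, so $d + e > 0$ and $\lambda < 1$. Combining, $0 \le \lambda < 1$ and $A \triangle_\lambda B$, as claimed.

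The main obstacle is bookkeeping rather than conceptual: the crux is noticing that both $A - B$ and $I - A - B$ are traceless, which reduces the operator equation to the scalar equation $d + e = 1 - \lambda$, and then matching the second modulus $e = \Vert \vec a + \vec b \Vert = 2\Vert M - C_0 \Vert$ to the sphere-containment inequality. Once these two identifications are in place the value of $\lambda$ is forced and the sign constraints drop out. (Reading ``inside'' strictly merely replaces $\le$ by $<$ throughout, giving $\lambda > 0$, still within the asserted range.)
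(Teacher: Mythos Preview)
Your argument is correct and takes a genuinely different route from the paper. The paper proceeds by an elaborate synthetic-geometric construction: it introduces the diameter $PP'$ of $\mathcal{B}_d$ through the midpoint $M$ of $AB$, a second sphere $\mathcal{P}_1$ on diameter $PM_0$, points $A_1, B_1, Q, Q'$, and then uses several similar-triangle arguments to obtain explicit convex combinations $A = (1-\alpha)P + \alpha(1-\beta)P' + \alpha\beta Q$ and $B = (1-\alpha)P + \alpha(1-\beta)P' + \alpha\beta Q'$, from which $|A-B| = \alpha\beta I_2$ and $|I_2-A-B| = (1-2\alpha+\alpha\beta)I_2$ are read off and $\lambda = 2\alpha(1-\beta)$ emerges. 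You instead observe from the outset that both $A-B$ and $I-A-B$ are traceless self-adjoint $2\times2$ matrices, so each modulus is automatically a scalar multiple of $I$; identifying those scalars with $d = \|\vec a-\vec b\|$ and $e = \|\vec a+\vec b\| = 2\|M-C_0\|$ turns the definition of $\lambda$-compatibility into the single scalar equation $d+e = 1-\lambda$, and the internal-couple containment $\|M-C_0\| + \tfrac12 d \le \tfrac12$ becomes exactly $d+e \le 1$.

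What each approach buys: yours is far shorter and isolates the mechanism cleanly---the traceless observation does all the work, and the value of $\lambda$ is forced rather than discovered. The paper's longer construction, on the other hand, delivers more than the theorem statement asks: it produces the explicit decomposition $A = \lambda C + \mu_1 P + \mu_2 Q$, $B = \lambda C + \mu_1 P + \mu_2 Q'$ in terms of rank-one projections, which the paper records in the Remark immediately following and which illustrates the general structure theorem for $x_0$-compatible pairs. If that decomposition is wanted, your route does not supply it without further work; if only the $\lambda$-compatibility conclusion is wanted, your proof is the cleaner one.
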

\begin{proof}
	
	Let $M$ be the mid-point of the line segment $AB$. Assume that the diameter of $\mathcal{B}^o$ passing through $M$ (and $C_0$) meets the sphere at $P$ and $P'$ in such a way that $M$ lies between $P$ and $C_0$. Then the corresponding matrices are projections in $\mathcal{P}_2$ with $P' = I_2 - P$. 
	
	Let $\mathcal{P}_0$ be the sphere described by $AB$ as a diameter. Let $\mathcal{P}_0$ meet $PP'$ at $\bar{M}$ and $M_0$ in such a way that $\bar{M}$ lies between $P$ and $M$. Let $\mathcal{P}_1$ be the sphere described by $PM_0$ as a diameter. Assume that the line passing through $M_0$ and $A$ meets $\mathcal{P}_1$ at $A_1$ and the line passing through $M_0$ and $B$ meets $\mathcal{P}_1$ at $B_1$. 
	
	As $AB$ is a diameter of $\mathcal{P}_0$, we get $M_0A \perp M_0B$. Thus $M_0A_1 \perp M_0B_1$ so that $A_1B_1$ is a diameter of $\mathcal{P}_1$. Therefore, $PA_1 \perp PB_1$. Let $M_1$ be the centre of $\mathcal{P}_1$. Then $PP'$ and $A_1B_1$ intersect at $M_1$. 
	
	{\center
		\begin{tikzpicture}
			\draw (2,2) circle (3cm);
			\draw (1,2) circle (2cm);
			\draw (1.6,2) circle (1.4cm);
			\draw (-1,2) -- (5,2);
			\draw (-1,2) -- (1,4.8) -- (3,-.8) -- (-1,2);
			\draw (.36,3.9) -- (1.68,.13);
			\draw (.36,3.9) -- (3,2); 
			\draw (1.68,.13) -- (3,2);
			\draw (1.14,3.35) -- (2.08,.7);
			\draw (1,4.8) -- (5,2);
			\coordinate (A) at (1.14,3.35);
			\node[above left] at (A) {$A$};
			\coordinate (B) at (2.08,.7);
			\node[above left] at (B) {$B$}; 
			\coordinate (M_0) at (3,2);
			\node[above right] at (M_0) {$M_0$};
			\coordinate (M) at (1.61,2);
			\node[above left] at (M) {$M$};
			\coordinate (M_1) at (1.03,2);
			\node[above left] at (M_1) {$M_1$};
			\coordinate (barM) at (.4,2);
			\node[above left] at (barM) {$\bar{M}$};
			\coordinate (P) at (-1,2);
			\node[left] at (P) {$P$};
			\coordinate (P') at (5,2);
			\node[right] at (P') {$P'$};
			\coordinate (Q) at (1,4.8);
			\node[above] at (Q) {$Q$};
			\coordinate (Q') at (3,-.8);
			\coordinate (C_0) at (1.8,2);
			\node[above right] at (C_0) {$C_0$};
			\node[below] at (Q') {$Q'$};
			\coordinate (A_1) at (.36,3.9);
			\node[above left] at (A_1) {$A_1$};
			\coordinate (B_1) at (1.7,.09);
			\node[below] at (B_1) {$B_1$}; 
			\coordinate (Q') at (3,-1.5);
			\node[below left] at (3,-1.5) {Figure.1};
		\end{tikzpicture} 
	}

 	Extend $PA_1$ to meet $\mathcal{B}_d$ at $Q$ and extend $PB_1$ to meet $\mathcal{B}_d$ at $Q'$. As $PA_1 \perp PB_1$, we have $PQ \perp PQ'$. Thus $QQ'$ is a diameter of $\mathcal{B}_d$ and consequently, the matrices corresponding to the points $Q$ and $Q'$ are projections in $\mathcal{P}_2$ with $Q' = I_2 - Q$. (Figure.1.) 
 	
 	Find $\alpha, \beta, \gamma \in [0, 1]$ such that 
 	\begin{eqnarray}
 		M_0 &=& (1 - \alpha) P + \alpha P' \\ 
 		A &=& (1 - \beta) M_0 + \beta A_1 \\ 
 		A_1 &=& (1 - \gamma) P + \gamma Q.
 	\end{eqnarray} 
 	Since $\triangle M_0AB$ is similar to $\triangle M_0A_1B_1$ we get 
 	\begin{eqnarray}
 		B &=& (1 - \beta) M_0 + \beta B_1 \\ 
 		B_1 &=& (1 - \gamma) P + \gamma Q'.
 	\end{eqnarray} 
 	
 	Next, as $QP \perp QP'$, we get that $\triangle PQP'$ is similar to $\triangle PA_1M_0$. Then $\frac{PA_1}{A_1Q} = \frac{PM_0}{M_0P'}$. In other words, $\frac{\gamma}{1 - \gamma} = \frac{\alpha}{1 - \alpha}$ so that $\gamma = \alpha$. Thus, using the equations $(1)$ to $(5)$, we conclude that $A$ and $B$ can be expressed in terms of $P, P'$ and $Q$: 
 	
	\begin{eqnarray}
		A &=& (1 - \alpha) P + \alpha (1 - \beta) P' + \alpha \beta Q \\ 
		B &=& (1 - \alpha) P + \alpha (1 - \beta) P' + \alpha \beta Q'.
	\end{eqnarray} 
	If we identify $A$ and $B$ in $\mathcal{S}$ and $P$, $Q$ and $Q' := I_2 - Q$ in $\mathcal{P}_2$, the relations $(6)$ and $(7)$ still hold. Thus we get 
	$$\vert A - B \vert = \vert \alpha \beta (Q - Q') \vert = \alpha \beta I_2$$ 
	and 
	\begin{eqnarray*}
		\vert I_2 - A - B \vert &=& \vert I_2 - 2 (1 - \alpha) P - 2 \alpha (1 - \beta) P' - \alpha \beta I_2 \vert \\ 
		&=& \vert (1 - \alpha \beta) (P + P') - 2 (1 - \alpha) P - 2 \alpha (1 - \beta) P' \vert \\ 
		&=& \vert (1 - 2 \alpha + \alpha \beta) (P' - p) \vert \\ 
		&=& \vert 1 - 2 \alpha + \alpha \beta \vert I_2.
	\end{eqnarray*} 
	
	Since $\triangle M_0B_1A_1$ is congruent to $\triangle PA_1B_1$, we get that $\triangle PA_1B_1$ is similar to $\triangle PQQ'$ and consequently, $\triangle PA_1M_1$ is similar to $\triangle PQC_0$. Then 
	$$\frac{PC_0}{PM_1} = \frac{PQ}{PA_1} = \frac{PA_1 + A_1Q}{PA_1} = 1 + \frac{A_1Q}{PA_1} = 1 + \frac{1 - \gamma}{\gamma} = \frac{1}{\gamma} = \frac{1}{\alpha}.$$ 
	Since the diameter of the Poincar\'{e} sphere $\mathcal{B}_d$ is $1$, we get $PM_1 = \alpha PC_0 = \frac{\alpha}{2}$. Next, using similarity of $\triangle M_0AM$ and $\triangle\ M_0A_1M_1$, we have 
	$$\frac{M_0M_1}{M_0M} = \frac{M_0A_1}{M_0A} = \frac{M_0A + AA_1}{M_0A} = 1 + \frac{AA_1}{M_0A} = 1 + \frac{1 - \beta}{\beta} = \frac{1}{\beta}.$$ 
	Thus 
	$$\alpha = 2 PM_1 = PM_0 = PM_1 + M_1M_0 = \frac{\alpha}{2} + \frac{MM_0}{\beta}$$ 
	so that $MM_0 = \frac{\alpha \beta}{2}$. Therefore, 
	$$PM = PM_0 - MM_0 = \alpha - \frac{\alpha \beta}{2}.$$ 
	Since $PM \le \frac 12$, we get $2 \alpha - \alpha \beta \le 1$. 
	
	Thus $\vert I_2 - A - B \vert = (1 - 2 \alpha + \alpha \beta) I_2$ whence 
	$$\vert A - B \vert + \vert I_2 - A - B \vert = (1 - 2 \alpha (1 - \beta)) I_2.$$
	Put $2 \alpha (1 - \beta) = \lambda$. Then $0 \le \lambda < 1$ and $A \triangle_{\lambda} B$. 
	
\end{proof}
\begin{remark}
	We note that 
	$$\frac 12 \lambda = \alpha (1 - \beta) = PM - MM_0 = PM - M\bar{M} = P\bar{M}.$$
	Further, if we put $1 - \alpha - \alpha (1 - \beta) = \mu_1$ and $\alpha \beta = \mu_2$, then 
	$$A = \lambda C + \mu_1 P + \mu_2 Q$$ 
	and 
	$$B = \lambda C + \mu_1 P + \mu_2 Q'$$ 
	where $C = \frac 12 I_2$. This expands Theorem \ref{acc}(3) to $\lambda$-compatibility for $0 \le \lambda < 1$ when $\mathcal{M}_0 = \mathbb{C}$. 
	
	It is easy to show that if $A, B \in E(\mathbb{C}^2)$ are $\lambda$-compatible for some $0 \le 0 < 1$, then the corresponding points are an internal couple with respect to the Poincar\'{e} sphere $\mathcal{B}_d$. 
\end{remark}


\section{Appendix}  

The notion of absolute compatibility was studied and characterized in \cite{K18,JKP,K20,K22}. We summarize the results obtained in these papers to have an idea why the absolutely compatible pairs of effects form a natural class of examples for (non-commuting) pairs of coexistent effects. Though the effective part is described in part (3) of the following theorem, the other parts are included to complete the description. 
\begin{theorem}\label{acc} 
	Let $a, b \in E(H)$ with $a \triangle b$. 
	\begin{enumerate}
		\item There exist mutually orthogonal projections $p_1, p_2, p_3, p_4, p_5, p_6$ in $B(H)$ with $p_1 + p_2 + p_3 + p_4 + p_5 + p_6 = I$  such that $a$ and $b$ have the following matrix decomposition with respect to $\lbrace p_1, p_2, p_3, p_4, p_5, p_6 \rbrace$: 
		$$a = 
		\begin{bmatrix}
			p_1 & 0 & 0 & 0 & 0 & 0 \\ 0 & a_2 & 0 & 0 & 0 & 0 \\ 0 & 0 & a_{33} & a_{34} & 0 & 0 \\ 0& 0 & a_{34}^* & a_{44} & 0 & 0 \\ 0 & 0 & 0 & 0 & a_5 & 0 \\ 0 & 0 & 0 & 0 & 0 & 0
		\end{bmatrix} 
		~ \mbox{and} ~ b = 
		\begin{bmatrix}
			b_1 & 0 & 0 & 0 & 0 & 0 \\ 0 & p_2 & 0 & 0 & 0 & 0 \\ 0 & 0 & b_{33} & - a_{34} & 0 & 0 \\ 0& 0 & - a_{34}^* & b_{44} & 0 & 0 \\ 0 & 0 & 0 & 0 & 0 & 0 \\ 0 & 0 & 0 & 0 & 0 & b_6
		\end{bmatrix} 
		$$ 
		where $a_i := p_i a p_i$ and $b_i := p_i b p_i$ for $i \in \lbrace 1, 2, 5, 6 \rbrace$ and $a_{ij} = p_i a p_j$ and $b_{ij} = p_i b p_j$ for $i, j \in \lbrace 3, 4 \rbrace$. Some of them possibly may be null projections. Also. $p_3 = 0$ and $p_4 = 0$ if and only if $a b = b a$. If $a b \ne b a$, then $$\hat{a} := \begin{bmatrix} a_{33} & a_{34} \\ a_{34}^* & a_{44} \end{bmatrix} ~ \mbox{and} ~ \hat{b} := \begin{bmatrix} b_{33} & - a_{34} \\ - a_{34}^* & b_{44} \end{bmatrix}$$ are strict elements of $E((p_3 + p_4) H)$ with $\hat{a} \triangle \hat{b}$. (An effect $x \in E(H)$ is said to be \emph{strict}, if for projections $p$ and $q$ in $B(H)$ with $p \le x \le q$, we have $p = 0$ and $q = I$.) 
		
		Moreover, 
		\begin{enumerate}
			\item $a_{34} a_{34}^* = (p_3 - a_{33}) (p_3 - b_{33}) = (p_3 - b_{33}) (p_3 - a_{33})$; 
			\item $a_{34}^* a_{34} = a_{44} b_{44} = b_{44} a_{44}$; and 
			\item $a_{34} = a_{33} a_{34} + a_{34} a_{44} = b_{33} b_{34} + b_{34} b_{44}$.
		\end{enumerate} 
		\item If in addition, $a$ and $b$ are strict, then $a b \ne b a$ and in $(1)$, we have $p_1, p_2, p_5$ and $p_6$ are null projections. Moreover, there exists a projection $p$ in $B(H)$ for which $p H$ is isometrically isomorphic to $(I - p) H$; a pair of strict elements $a_0, b_0 \in E(p h)$ for which $a_0 b_0 = b_0 a_0$, $a_0^2 + b_0^2 \le p$ and $a_0^2 + b_0^2$ strict in $E(p H)$; and a unitary $u: H \to p H \oplus p H$ such that 
		$$a = u^* \begin{bmatrix} a_0^2 & a_0 b_0 \\ a_0 b_0 & p - a_0^2 \end{bmatrix} u ~ \mbox{and} ~ b = u^* \begin{bmatrix} b_0^2 & - a_0 b_0 \\ - a_0 b_0 & p - b_0^2 \end{bmatrix} u.$$
		Further, if $\mathcal{M}$ is the von Neumann algebra generated by $a$ and $b$ and if $\mathcal{M}_0$ is the (abelian) von Neumann algebra generated by $a_0$ and $b_0$, then $\mathcal{M}$ is isometrically $*$-isomorphic to $\mathbb{M}_2(\mathcal{M}_0)$. Therefore, the absolute compatibility of a pair of strict effects in $E(H)$ is in effect that of a pair of strict elements in $\mathbb{M}_2(\mathcal{M}_0)$ for some abelian von Neumann algebra $\mathcal{M}_0$. 
		\item\label Consider a pair of strict elements $A, B \in \mathbb{M}_2(\mathcal{M}_0)$ for some abelian von Neumann algebra $\mathcal{M}_0$. Then there exist a unitary $U \in \mathbb{M}_2(\mathcal{M}_0)$ and strict elements $p_1, p_2, x \in \mathcal{M}_0$ with $p_1^2 + p_2^2 = I$ such that 
		$$U^* A U = ((I - x) \otimes I_2) P_0 + (x \otimes I_2) P$$ 
		and 
		$$U^* B U = ((I - x) \otimes I_2) P_0 + (x \otimes I_2) P'.$$ 
		Here $I$ is the identity element of $\mathcal{M}_0$, $P_0 := \begin{bmatrix} 0 & 0 \\ 0 & I \end{bmatrix}$, $P := \begin{bmatrix} p_1^2 & p_1 p_2 \\ p_1 p_2 & p_2^2 \end{bmatrix}$, $I_2 := \begin{bmatrix} I & 0 \\ 0 & I \end{bmatrix}$, $P' := I_2 - P$ and $z \otimes I_2 := \begin{bmatrix} z & 0 \\ 0 & z \end{bmatrix}$. (Note that the set $\lbrace z \otimes I_2: z \in \mathcal{M}_0 \rbrace$ is the centre of $\mathbb{M}_2(\mathcal{M}_0)$.) We recall (\cite[Definition 2.2]{K22}) that a projection like $P$ above is called an $\mathbb{M}_2$-strict projection in $\mathbb{M}_2(\mathcal{M}_0)$.
	\end{enumerate} 
\end{theorem}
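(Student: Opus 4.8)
The plan is to prove the three parts in sequence, since part (2) is the strict specialization of part (1) and part (3) is the concrete matrix model that (2) abstracts. Throughout I would work from the characterization in Theorem~\ref{ac}: $a \triangle b$ is equivalent to $a \dot{\wedge} b, a \dot{\vee} b \in E(H)$ together with the single orthogonality relation $(a \dot{\wedge} b)(I - (a \dot{\vee} b)) = 0$. The entire structure theorem is essentially an unpacking of what this one relation forces on the pair.

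For part (1), the idea is to separate the commuting part from the genuinely non-commuting part. From Theorem~\ref{ac} the positive elements $a \dot{\wedge} b$ and $I - (a \dot{\vee} b)$ are orthogonal, and together with the supports of $a$, $b$, $I - a$, $I - b$ they partition $H$ into blocks on which at least one of $a, b$ takes an extreme projection value (these are the diagonal atoms $p_1, p_2, p_5, p_6$, where one effect is $0$ or the block identity while the other ranges freely) and a single block $(p_3 + p_4)H$ on which both $a$ and $b$ are strict. On that block $\hat{a} \triangle \hat{b}$ is inherited by restriction. The equivalence $p_3 = p_4 = 0 \iff ab = ba$ I would obtain by showing that the off-diagonal entry $a_{34}$ is precisely the obstruction to commutation, so commuting effects collapse the corner. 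The three identities (a)--(c) are then block algebra: expand $|a - b| = (a - b)^+ + (a - b)^-$ and $|I - a - b|$ in the $2\times 2$ corner and match entries, using $(a\dot{\wedge} b)(I - (a \dot{\vee} b)) = 0$.

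For part (2), strictness of $a$ and $b$ immediately removes the projection atoms, forcing $p_1 = p_2 = p_5 = p_6 = 0$ and $ab \ne ba$, so only the $2\times 2$ corner survives. The heart of the argument is to produce the matrix model. I would take the polar decomposition of the off-diagonal entry $a_{34}$ to build a partial isometry identifying $p_3 H$ with $p_4 H$, set $p := p_3$, and extract commuting $a_0, b_0 \in E(pH)$ from the diagonal blocks so that the coupling is exactly $a_0 b_0$. Relations (a)--(c) are precisely what force $a_0, b_0$ to commute and to satisfy $a_0^2 + b_0^2 \le p$ with $a_0^2 + b_0^2$ strict. A direct check, computing $|a - b| = (a_0^2 + b_0^2) \otimes I_2$ and $|I - a - b| = (p - a_0^2 - b_0^2) \otimes I_2$, confirms the displayed blocks really are absolutely compatible, and sending $a, b$ to their blocks realizes the generated von Neumann algebra $\mathcal{M}$ as $\mathbb{M}_2(\mathcal{M}_0)$, with $\mathcal{M}_0$ the abelian algebra generated by $a_0, b_0$.

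For part (3), I would argue directly inside $\mathbb{M}_2(\mathcal{M}_0)$. Writing a strict $A$ in block form over the abelian center and using that $\lbrace z \otimes I_2 \rbrace$ is the center, I would diagonalize the relevant coupling piece by a unitary $U \in \mathbb{M}_2(\mathcal{M}_0)$ so that one summand becomes the fixed projection $P_0$; the remaining freedom is absorbed into an $\mathbb{M}_2$-strict projection $P$ determined by $p_1, p_2 \in \mathcal{M}_0$ with $p_1^2 + p_2^2 = I$, while the weight $x$ records the common coupling strength. Solving the entrywise equations then yields the stated forms for $U^* A U$ and $U^* B U$. I expect the main obstacle to be part (2): proving that $p_3 H$ and $p_4 H$ are genuinely isomorphic and that the extracted $a_0, b_0$ commute, equivalently that $\mathcal{M}$ is homogeneous of type $\mathrm{I}_2$ over its center. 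This is where the full strength of the orthogonality relation, and not merely the order relations $a \dot{\wedge} b, a \dot{\vee} b \in E(H)$, is required.
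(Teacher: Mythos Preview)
The paper does not contain a proof of Theorem~\ref{acc}. The theorem is placed in an appendix and is explicitly presented as a summary of results already established in the references \cite{K18,JKP,K20,K22}; the sentence introducing it reads ``We summarize the results obtained in these papers\ldots''. Consequently there is no in-paper argument to compare your proposal against.

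That said, your outline is broadly consistent with how those references proceed, and you have correctly identified the crux: in part~(2) one must show that the off-diagonal block $a_{34}$ has dense range and cokernel (so that $p_3H$ and $p_4H$ are isometrically isomorphic via the polar partial isometry), and that the resulting diagonal pieces commute. This is exactly where strictness of $a$ and $b$ together with the orthogonality $(a\dot{\wedge}b)(I-(a\dot{\vee}b))=0$ is used in an essential way, not merely the order inequalities. Your sketch of part~(1), partitioning $H$ by the supports of $a,\,I-a,\,b,\,I-b$ and of $a\dot{\wedge}b,\,I-(a\dot{\vee}b)$, and of part~(3), diagonalizing over the abelian center, matches the strategy in \cite{JKP,K20,K22}. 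If you intend to write out a self-contained proof, the delicate step you flagged---that $\mathcal{M}$ is type~$\mathrm{I}_2$ over its center---is indeed the place requiring care; the identities (a)--(c) are what make the polar decomposition of $a_{34}$ implement the required isomorphism.
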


\thanks{{\bf Acknowledgements:} The author acknowledge with heartfelt gratitude a fruitful discussion with B. V. Rajarama Bhat that helped the author to enrich some of the results in this paper. The author declares that no data was used while preparing this paper and that there is no conflict of interest arising out of it.


\begin{thebibliography}{100}

\bibitem{BS10} P. Busch and H.-J. Schmidt, \textit{Coexistence of qubit effects}, Quantum Inf. Process, 9 (2010), 143-169.

\bibitem{JKP} N. K. Jana, A. K. Karn and A. M. Peralta, Absolutely compatible pairs in a von Neumann algebra, Electronic Journal of Linear Algebra, 35(2019), 599-618.


\bibitem{K18} A. K. Karn, \textit{Algebraic orthogonality and commuting projections in operator algebras} Acta. Sci. Math. (Szeged), \textbf{84} (2018), 323--353.

\bibitem{K20} A. K. Karn, \textit{Absolutely compatible pair of elements in a von Neumann algebra-II}, Revista de la Real Academia de Ciencias Exactas, Fisicas y Naturales. Serie A. Matem\'{a}ticas (RCSM), 114(3) (2020), Article 153, 7 pages.

\bibitem{K21} A. K. Karn, \textit{Orthogonality: an antidote to Kadison's anti-lattice theorem}, Positivity and its Applications, Trends in Mathematics, Birkhauser, Switzerland, (2021), 217-227. 

\bibitem{K22} A. K. Karn, \textit{Absolute compatibility and Poincar\'{e} sphere}, Ann. Funct. Anal., 13 (2022), Article 39, 16 pages.

\bibitem{GL83} G. Ludwig, \textit{Foundations of Quantum Mechanics} Vol. I, (Translated from German by C. A. Hein), Springer Verlag, New York (1983).

\end{thebibliography}
\end{document}